\newtheorem{theorem}{Theorem}[section]
\newtheorem{lemma}{Lemma}[section]
\newtheorem{definition}{Definition}[section]
 \newcommand{\<}{\left\langle}
\renewcommand{\>}{\right\rangle}
\newcommand{\eps}{\varepsilon}
\newcommand{\Real}{\mathbb{R}}
\newcommand{\norm}[1]{\left\Vert#1\right\Vert}
\newcommand{\be} {\begin{equation}}
\newcommand{\ee} {\end{equation}}
\newcommand{\bea} {\begin{eqnarray}}
\newcommand{\eea} {\end{eqnarray}}
\newcommand{\Bea} {\begin{eqnarray*}}
\newcommand{\Eea} {\end{eqnarray*}}
\newcommand{\pa} {\partial}
\newcommand{\al} {\alpha}
\newcommand{\ba} {\beta}
\newcommand{\de} {\delta}
\newcommand{\na}{\nabla}
\newcommand{\ga} {\gamma}
\newcommand{\Ga} {\Gamma}
\newcommand{\Om} {\Omega}
\newcommand{\De} {\Delta}
\newcommand{\la} {\lambda}
\newcommand{\no} {\nonumber}
\newcommand{\f}{\frac}
\newcommand{\R}{\mathbb R}
\newcommand{\N}{\mathbb N}
\newcommand{\Rn}{\mathbb R^N}
\newcommand{\Iom}{\int_{\Omega}}
\newcommand{\deb}{\rightharpoonup}
\makeatletter \@addtoreset{equation}{section} \makeatother
\begin{document}
	\title[On Nonlocal elliptic equations with jumping nonlinearity]{Multiplicity results for non-local elliptic problems with jumping nonlinearity}
	
	\author{Debangana Mukherjee}
	\address{Debangana Mukherjee, Department of Mathematics and Information Technology, Montanuniversit\"at, Leoben, Franz-Josef-Strasse 18, 8700 Leoben, Austria}
	\email{debangana18@gmail.com}

	\subjclass[2010]{Primary 35J20, 35J25,  35J60, 35R11 }
	\keywords{Fractional Laplacian, Jumping Nonlinearities, Multiple Solutions, Sign-Changing Solutions, Non-local operator, Fu\v cik Spectrum.}
	\maketitle
	\date{}
	
	\begin{abstract} The present paper studies the fractional $p$-Laplacian boundary value problems with jumping nonlinearities at zero or infinity and obtain the existence of multiple solutions and sign-changing solutions by constructing the suitable pseudo-gradient vector field of the corresponding energy functional.
	\end{abstract}
	\tableofcontents
 \section{ Introduction}
 In this article, we  consider the following problem:
\begin{equation*}
	(\mathcal{P})
	\left\{\begin{aligned}
		(-\De)^s_p u  &=h(u)\quad\text{in }\quad \Om, \\
		u &= 0  \quad\text{in }\quad \Rn \setminus \Om,
	\end{aligned}
	\right.
\end{equation*}
where $\Om \subset \Rn$ is open, smooth, bounded domain with smooth boundary, $s \in (0,1), p>1,N<ps$;
the non-local Operator $(-\Delta)^s_p$ is defined as follows:
\begin{align} \label{frac s_p}
	(-\Delta)^s_p u(x)=\lim_{\eps\to 0}\int_{\mathbb{R}^N\setminus B_\eps(x)}\frac{|u(y)-u(x)|^{p-2}(u(y)-u(x))}{|x-y|^{N+ps}}dy,\,\,\,x\in\mathbb{R}^N,
\end{align}
 and we assume $h(u)$ has `jumping' nonlinearities at zero or infinity:
\begin{itemize}
	\item [($A_1$)]
	$\lim_{u \to 0^+} \frac{h(u)}{|u|^{p-2}u}=a_0 ;\,\  \lim_{u \to 0^-}\frac{h(u)}{|u|^{p-2}u}=d_0,$
	\item [($A_2$)]
	$\lim_{u \to +\infty} \frac{h(u)}{|u|^{p-2}u}=a_1 ;\,\   \lim_{u \to -\infty}\frac{h(u)}{|u|^{p-2}u}=d_1,$
	\item [($A_3$)]
	$h(0)=0,\,\ h(u)u \geq 0$ and $h(u)$ is locally Lipschitz.
\end{itemize}
In the classical (local) case for $p=2,$ this problem has been extensively  studied by several authors \cite{Bartsch}, \cite{Chang-1}, \cite{Chang-2}, \cite{Dancer-2}, \cite{Dancer-1}, \cite{Hofer}, \cite{Li-1}, \cite{Li-2}, \cite{Li-Zhang} by using Morse theory, linking methods, the technique of descent flow of the gradient 
and we cite \cite{Perera-Carl},\cite{Dancer-2},\cite{Dancer-1},\cite{Li-1},\cite{Li-2} for the jumping nonlinearities.

A classical topic in nonlinear analysis is the study of existence and multiplicity of solutions for nonlinear equations. In the past, significant amount of research is carried out for studying the following general boundary value problem
\begin{equation*}
	\left\{\begin{aligned}
		-\De u  &=h(u)\quad\text{in }\quad \Om, \\
		u &= 0  \quad\text{on }\quad  \pa \Om.
	\end{aligned}
	\right.
\end{equation*}

In \cite{Dancer-2}, the authors have considered  the Dirichlet boundary value problems with jumping nonlinearities at zero or infinity and obtained multiplicity of solutions and their results heavily depends on the property of $h(u).$ More precisely, they have considered 
\begin{equation*}
	h(u) =
	\left\{\begin{aligned}
	  &au-\al u^2,\,\  u\geq 0,\\
		&du+\ba u^2,\,\ u\leq 0,
	\end{aligned}
	\right.
\end{equation*}  
where $\al \geq 0, \ba>0$ and $a,d>\la_1,$ $\la_1$ being the first eigenvalue of Dirichlet Laplacian. Using variational methods, thay have established existence of sign-changing solutions.  In \cite{Li-1}, the authors have obtained up to six non-trivial solutions, out of which two are sign-changing, two are positive and two negative. In \cite{Li-3}, the authors have studied asymptotically linear and superlinear boundary value problems involving a reaction nonzero at zero and obtained multiple solutions and sign-changing solutions (up to five non-trivial solutions) and the sign-changing solutions change sign exactly once.
A substancial amount of work has been followed through
to obtain the sign-changing solutions and multiple 
solutions for elliptic boundary value problems which has been accomplished by many authors,
see \cite{Amann}, \cite{Bartsch}, \cite{Guo-Sun}, \cite{Hofer} and the references therein.

 In the local case, for $p \neq 2,$ existence of solutions and multiple solutions have been studied in \cite{Costa}, \cite{Figueiredo}, \cite{Dancer-3}, \cite{ Drabek}, \cite{Drabek-1}, \cite{Perera-1}, \cite{Su-Liu} but in these papers, no work on sign-changing solution have been considered.  The existence of sign-changing solutions for $p \neq 2$ has been studied in \cite{Zhang-Li}.  In the non-local case, multiplicity results and existence of sign-changing solutions have been discussed by several authors, we refer a few among them (see \cite{Bhakta-3}, \cite{Bhakta-1}, \cite{Bhakta-2}, \cite{Molica} and the refernces therein). Very recently, in \cite{Bhakta-3}, the authors have established the existence of infinitely many nontrivial solutions for the class of $(p,q)$ fractional elliptic equations in bounded domains in $\Rn.$


In the classical (local) case for the $p$-Laplace operator, this problem is addressed in \cite{Zhang-Li}.
In the spirit of \cite{Zhang-Li}, we study the existence of multiple solutions including sign-changing solutions for the problem $(\mathcal{P})$. 
In general, the existence of solutions is obtained by Mountain Pass Lemma. 
For the existence of sign-changing solutions, the authors in 
~\cite{Zhang-Li} have constructed the pseudo-gradient vector field in $W^{1,p}_0(\Om)$ and then using the dynamics theory in Banach spaces, they have proved sign-changing and multiple solutions. We have incorporated the same technique but in the non-local setting. Unlike the local case, such as the value $\De u(x)$, is computed using the value of $u$ in an arbitrarily small neighborhood of $x$, non-local operators need information about the values of $u$ throughout the whole domain. The rectitude of our work lies in overcoming the difficulties arised due to fractional scheme.
As far as we know, such result for existence of multiple and sign-changing solutions for all $p$ lying in the non-local framework, is not available in the literature.

%

\subsection{Functional Setting}
We denote the standard fractional Sobolev space by $W^{s,p}(\Omega)$ endowed with the norm
$$
\|{u}\|_{W^{s,p}(\Om)}:=\|{u}\|_{L^p(\Om)}+\left(\int_{\Om\times\Om} \frac{|u(x)-u(y)|^p}{|x-y|^{N+sp}}dxdy\right)^{1/p},
$$  for $p\geq 1$ and $s\in(0,1).$
We define 
$$
X:=\Big\{u:\mathbb{R}^N\to\mathbb{R}\mbox{ measurable }\Big|u|_{\Omega}\in L^p(\Omega)\mbox{ and }
\int_{Q} \frac{|u(x)-u(y)|^p}{|x-y|^{N+sp}}dxdy<\infty\Big\}.
$$
where $Q=\R^{2N}\setminus (C\Om \times C\Om)$ with $C\Om=\Rn \setminus \Om. $
The space $X$ is endowed with the following norm,
$$||u||_p=\big(\Iom |u|^pdx\big)^{\frac{1}{p}}+\left(\int_{Q} \frac{|u(x)-u(y)|^p}{|x-y|^{N+sp}}dxdy\right)^{1/p}.$$

We define $X_{0}(\Om) :=\Big\{u \in X:u=0 \quad\text{a.e. in}\quad \Rn \setminus \Om\Big\}$ and  
for any $p>1$, $X_{0}(\Om)$ is a uniformly convex Banach space endowed with the norm   
$$||u||_{X_0(\Om)}=\left(\int_{Q} \frac{|u(x)-u(y)|^p}{|x-y|^{N+sp}}dxdy\right)^{1/p}.$$

Since $u=0$ in $\Rn\setminus\Om,$ the above integral can be extended to all of $\mathbb{R}^N.$ As $\Om\subset \mathbb{R}^n$ be smooth bounded domain, therefore, for $N<sp$, the embedding $X_0(\Om) \hookrightarrow C^{0,\alpha}(\Om)$ is continuous with $\al:=\frac{sp-N}{p}$, (see Theorem 8.2, \cite{ NePaVal}).

\subsection{Dancer-{F}u\v cik spectrum }
Let $\Om \subset \Rn$ be a bounded  Lipschitz domain. 
Let us consider the following equation:
\begin{equation}\label{P-1}
\left\{\begin{aligned}
(-\De)^s_p u  &=a(u^+)^{p-1}-d(u^-)^{p-1} \quad\text{in }\quad \Om, \\
u &= 0  \quad\text{in }\quad \Rn \setminus \Om.
\end{aligned}
\right.
\end{equation} 
where $a \in \R, d \in \R, s \in (0,1), p>1$ and $u^+=\max \{u,0\},\,\ u^-=-\min \{u,0\}.$ 

\begin{definition}(weak solution)
 A weak solution of  (\ref{P-1}) is defined as $u \in X_0(\Om)$  such that $u$ satisfies
\begin{align}\label{weak-sol}
	&\int_{\R^{2N}}\frac{|u(x)-u(y)|^{p-2}(u(x)-u(y))(v(x)-v(y))}{|x-y|^{N+sp}}dxdy\no\\
	&\qquad=	\Iom [a(u^+)^{p-1}-d(u^-)^{p-1}]vdx \,\ \text{for all}\,\ v \in X_0(\Om).
\end{align}	
\end{definition}

\begin{definition}(Dancer-{F}u\v cik spectrum)
The Dancer-Fu\v cik spectrum of fractional $p$-Laplacian operator $(-\De)^s_p$ in $\Om$ is denoted by $\Sigma_p^s(\Om)$ and is defined as:
$$\Sigma_p^s(\Om)=\big\{ (a,d) \in \R^2: (\ref{P-1})\,\ \text{has nontrivial weak solution} \,\ u  \,\ \text{in}\,\ X_0(\Om) \big\}.$$
\end{definition}
If we take $a=d:=\la,$ then (\ref{P-1}) can be written as:
\begin{equation}\label{P--}
\left\{\begin{aligned}
(-\De)^s_p u  &=\la |u|^{p-2}u \quad\text{in }\quad \Om, \\
u &= 0  \quad\text{in }\quad \Rn \setminus \Om.
\end{aligned}
\right.
\end{equation} 
Therefore, the Dancer-Fu\v cik spectrum of (\ref{P-1}) is same as the usual spectrum of (\ref{P--}).
Let us denote the spectrum of (\ref{P--}) as the set:= $\{\la_k\}_{k=1}^{\infty}.$ We note that for all $k \in \N, (\la_k,\la_k) \in \Sigma_p^s(\Om).$ Moreover,  the two lines $\{\la_1 \times \R \}$ and $\R \times \{\la_1 \} \subset \Sigma_p^s(\Om).$ In the case of $s=1$ and $p=2,$ (\ref{P-1}) becomes the Dancer-Fu\v cik spectrum of the Dirichlet Laplacian. This notion of Fu\v cik spectrum was first introduced by Fu\v cik in \cite{Fucik} and Dancer in \cite{Dancer},\cite{Dancer-4}.


For $s=1$ and $p \neq 2,$ Equation (\ref{P-1}) coincides with the Dancer-Fu\v cik spectrum for 
the p-Laplacian operator $\De_pu=\text{div}(|u|^{p-2}\na u)$ and in this case, this problem has been researched by many authors, see for instance, \cite{Drabek-2}, \cite{Perera-1}. 


In recent literature, lot of work is done concerning the Dancer-Fu\v cik spectrum in the non-local framework. In \cite{Sarika-3}, Goyal and Sreenadh have studied the Dancer-Fu\v cik spectrum for the fractional Laplace operator $(-\De)^s.$ In \cite{Sarika-1}, the authors have worked on the Dancer-Fu\v cik spectrum for the $p$-fractional Laplace operator with non-local normal derivative conditions. In \cite{Sarika-2}, Goyal has discussed the Fu\v cik spectrum of $p$-fractional Hardy Sobolev operator with weight function. Recently, in \cite{Brasco-Parini}, the authors have researched about the second eigenvalue for a fractional $p$-Laplace operator and showed that the second variational eigenvalue $\la_2$ is bigger than $\la_1$ and $(\la_1,\la_2)$ does not contain other eigenvalues. The existence of Fu\v cik eigenvalues for fractional $p$-Laplace operator with Dirichlet boundary conditions have been discussed by many authors, see for instance \cite{Perera-3}, \cite{Rossi}.

Let $\la_1:=\inf \{\norm{u}_{X_0(\Om)}^p : u \in X_0(\Om),\, |u|_p=1 \}.$ It is known in \cite{Lindqvist}
that $\la_1$ (the principal eigen value )
is simple and there exists $\phi_1 \in X_0(\Om) \cap C^{\ba}(\Om)$  where $\ba=s-\frac{2N}{p}$ ( if $sp>2N$) such that $|\phi_1|_p=1$ and $\phi_1>0$ in $\Om.$ It is also known \cite{Lindqvist} 
that $\la_1$ is in the isolated spectrum and
$$\la_2:=\min \{\la \in \R: \la \,\ \text{is an eigen-value and}\,\ \la>\la_1 \}.$$
Using the properties of Dancer-Fu\v cik spectrum, it is easy to construct a nontrivial curve in $\Sigma^s_p(\Om).$ With this, we define  $\eta : (\la_1,\la_2] \to \R$ such that $\eta$ has the following properties:
\begin{itemize}
	\item [(a)]
	$\eta$ is  a continous function, graph of $\eta$ lies in the $ad-$plane;
	\item[(b)]
	$\eta(\la_2)=\la_2, \lim_{\la \to \la_1+0}\eta(\la)=+\infty$,$\eta$ is strictly decreasing;
	\item [(c)]
	(\ref{P-1}) has a non-trivial solution for $(a,d)=(a,\eta(a)), a \in (\la_1,\la_2]$ and $(a,d)=(\eta(d),d), d \in (\la_1,\la_2]$;
	\item [(d)]
	(\ref{P-1}) has no nontrivial solution for $\la_1<d<\eta(a), a \in (\la_1,\la_2]$
	or $\la_1<a<\eta(d), d\in (\la_1,\la_2].$
\end{itemize}	
For details, see   [section 8.2 \cite{Perera-2}].


Let us consider the curve $\Ga$ defined by:
\begin{equation}\label{Ga}
\Ga:=\{(a,\eta(a)) : \la_1<a \leq \la_2 \} \cup \{(\eta(d),d):\la_1<d\leq \la_2 \}
\end{equation}
and 
\begin{equation}\label{S}
S:=\{ (a,d) \in \R^2: (a,d)\,\  \text{lies above}\,\ \Ga \}. 
\end{equation}

The main results of this article are the following:
\begin{theorem}\label{Theorem-1}
Let $a_1,d_1 \in (-\infty, \la_1)$, $(a_0,d_0) \in S$  with $h$ satisfying the assumptions $(A_1)$, $(A_2)$ and $(A_3)$. Then  problem $(\mathcal{P})$ has at least three non-trivial solutions, at least one is positive, one is negative and one is sign-changing.
\end{theorem}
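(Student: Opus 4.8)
The plan is to study $(\mathcal P)$ variationally through the energy functional on $X_0(\Om)$,
\begin{equation*}
J(u)=\frac1p\|u\|_{X_0(\Om)}^{p}-\Iom H(u)\,dx,\qquad H(t)=\int_0^{t}h(\tau)\,d\tau,
\end{equation*}
which is $C^{1}$ because $h$ is locally Lipschitz and, since $N<sp$, the embedding $X_0(\Om)\hookrightarrow C^{0,\al}(\overline\Om)$ is continuous and (being into a H\"older space on a bounded domain) compact into $C(\overline\Om)$. Assumption $(A_2)$ with $a_1,d_1<\la_1$ yields $\delta>0$ and $C_\delta$ with $H(t)\le\frac{\la_1-\delta}{p}|t|^{p}+C_\delta$, so the variational characterization $\la_1=\inf\{\|u\|_{X_0(\Om)}^{p}:|u|_p=1\}$ makes $J$ coercive, weakly lower semicontinuous and, via the compact embedding, a functional satisfying the Palais--Smale condition. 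The same holds for the one-sided truncations $J_\pm$, obtained by replacing $h$ by $h_+$ (equal to $h$ on $[0,\infty)$, to $0$ on $(-\infty,0)$), resp. $h_-$.

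\emph{One-signed solutions.} Let $u_+$ minimize $J_+$ over $X_0(\Om)$, so $(-\De)^s_p u_+=h_+(u_+)$. Testing with $u_+^{-}$: the right-hand side $\Iom h_+(u_+)u_+^{-}dx$ vanishes (where $u_+^{-}>0$ one has $u_+<0$, hence $h_+(u_+)=0$), while under the double integral the pointwise inequality $|a-b|^{p-2}(a-b)\big(a^{-}-b^{-}\big)\le-|a^{-}-b^{-}|^{p}$ (the nonlocal analogue of $\int|\na u|^{p-2}\na u\cdot\na u^{-}\le-\int|\na u^{-}|^{p}$) gives $-\|u_+^{-}\|_{X_0(\Om)}^{p}\ge 0$, whence $u_+\ge0$. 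By $u_+\in C^{0,\al}(\overline\Om)$ and the strong maximum principle for $(-\De)^s_p$, either $u_+\equiv0$ or $u_+>0$ in $\Om$. Since $(a_0,d_0)\in S$ forces $a_0>\la_1$ (the region $S$ lies above $\Ga$, whose branches approach the lines $a=\la_1$, $d=\la_1$ only in the limit), assumption $(A_1)$ gives $J_+(t\phi_1)=\frac{t^{p}}{p}(\la_1-a_0)+o(t^{p})<0$ for small $t>0$, so $\inf J_+<0=J_+(0)$ and $u_+\nequiv0$; as $h_+(u_+)=h(u_+)$ where $u_+\ge0$, $u_+$ is a positive solution of $(\mathcal P)$ with $J(u_+)=J_+(u_+)<0$. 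Symmetrically $J_-$ gives a negative solution $u_-$.

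\emph{Sign-changing solution via invariant sets of a descending flow.} Following the scheme of \cite{Zhang-Li} adapted to the nonlocal setting, fix the closed convex cones $P=\{u\in X_0(\Om):u\ge0\}$, $-P$, and for small $\eps>0$ set $D^{\pm}_\eps=\{u:\operatorname{dist}_{X_0(\Om)}(u,\pm P)<\eps\}$. The pseudo-gradient field for $J$ is built from the continuous, compact solution operator $A:X_0(\Om)\to X_0(\Om)$, $A(u)=v$ where $(-\De)^s_p v=h(u)$ in $\Om$, $v=0$ in $\Rn\setminus\Om$: by strict monotonicity of $(-\De)^s_p$ one has $\langle J'(u),u-A(u)\rangle\ge c\,\|u-A(u)\|^{\max\{p,2\}}$, $\|J'(u)\|$ controls $\|u-A(u)\|$, and the decisive point is $A(\overline{D^{\pm}_\eps})\subset D^{\pm}_\eps$ for $\eps$ small, with $A$ having no fixed point in $\overline{D^{+}_\eps}$ (resp. $\overline{D^{-}_\eps}$) other than functions that are $\ge0$ (resp. $\le0$). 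This is where the nonlocality is felt: testing $(-\De)^s_p v=h(u)$ against $v^{-}$, the genuinely nonlocal cross-terms of the $\R^{2N}$-integral carry the favourable sign, while $h(0)=0$ and the local Lipschitz bound on $h$ control $\big|\Iom h(u)v^{-}dx\big|$ by an arbitrarily small multiple of $\|v^{-}\|_{X_0(\Om)}$ once $\operatorname{dist}(u,P)$ is small. Hence the negative flow of the field keeps $\overline{D^{\pm}_\eps}$ positively invariant, its critical points there are one-signed, and --- $u_\pm$ being the only critical points of $J$ on $\pm P$, with $J(u_\pm)<0$ --- the $u_\pm$ are strict local minimizers of the full functional $J$, with $\overline{D^{+}_\eps}$ and $\overline{D^{-}_\eps}$ disjoint for $\eps$ small.

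Finally, I would run a mountain-pass argument across the wall $W=X_0(\Om)\setminus(\overline{D^{+}_\eps}\cup\overline{D^{-}_\eps})$: with $c=\inf_{\ga}\max_{t\in[0,1]}J(\ga(t))$ over continuous paths $\ga:[0,1]\to X_0(\Om)$, $\ga(0)=u_+$, $\ga(1)=u_-$, one has $c>\max\{J(u_+),J(u_-)\}$, while $(a_0,d_0)\in S$ supplies a linking set near the origin on which $J$ stays positive and which separates $u_+$ from $u_-$, forcing $c>0=J(0)$; by Palais--Smale $c$ is a critical value, and a standard deformation argument compatible with the positive invariance of $\overline{D^{\pm}_\eps}$ (were all critical points at level $c$ inside $\overline{D^{+}_\eps}\cup\overline{D^{-}_\eps}$, a near-optimal path could be deformed below level $c$) yields a critical point $u_*\in W$; then $u_*^{+}\nequiv0\nequiv u_*^{-}$, so $u_*$ changes sign and $u_*\ne u_\pm$, $u_*\ne0$, giving the three asserted solutions. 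The main obstacle is precisely the invariance $A(\overline{D^{\pm}_\eps})\subset D^{\pm}_\eps$ and the construction of the descending flow in $X_0(\Om)$: for $p\ne2$, $s<1$ the equation for $A(u)$ does not decouple under the test function $(A(u))^{-}$ because $(-\De)^s_p$ sees $u$ on all of $\Rn$, so the favourable sign must be isolated inside the full double integral over $\R^{2N}$ and the truly nonlocal remainder estimated; likewise the maximum-principle, Hopf-type and $C^{0,\al}$-regularity inputs, and the comparison of the boundary decay rates $d(x,\pa\Om)^{s}$ and $d(x,\pa\Om)^{\al}$, must all be invoked in their fractional forms.
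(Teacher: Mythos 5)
Your construction of the two one-signed solutions (minimizing the truncated functionals $J_\pm$, killing the negative part via the pointwise inequality under the double integral, and using $a_0,d_0>\la_1$ to push the infimum below $0$) is a legitimate alternative to the paper, which instead gets $u_1,u_2$ as minimizers of $E$ on the flow-invariant cones $P^\circ,-P^\circ$. The coercivity/(PS) part also matches Lemma \ref{lem-2}.

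The gap is in the third solution. You claim a mountain-pass geometry between $u_+$ and $u_-$ with level $c>0=J(0)$, justified by ``a linking set near the origin on which $J$ stays positive.'' Under the hypotheses of Theorem \ref{Theorem-1} this is false: $(a_0,d_0)\in S$ means the nonlinearity at \emph{zero} sits above the Fu\v cik curve, so $0$ is not a local minimum --- already $E(t\phi_1)=\frac{t^p}{p}(\la_1-a_0)+o(t^p)<0$ for small $t$, and, more to the point, the paper's Lemma \ref{lem-4} produces a path $\Ga_1$ joining $P^\circ$ to $-P^\circ$ with $E_{(a_0,d_0)}<0$, whose small rescaling $\Ga_2=t\Ga_1$ satisfies $E<0$ along the whole path (estimate (\ref{D-1})). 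Concatenating $\Ga_2$ with descending-flow arcs inside $P^\circ$ and $-P^\circ$ gives paths from $u_+$ to $u_-$ with $\max E<0$, so your minimax value satisfies $c<0$ and the claimed separation fails; you have transplanted the geometry of Theorem \ref{Theorem-2} (where $a_0,d_0<\la_1$ makes $0$ a strict local minimum and the positive barrier sits near the origin) into the wrong theorem. The paper's actual mechanism is different: it sets $V_1=\{h: u(t_0,h)\in P^\circ\text{ for some }t_0\ge0\}$, the basin of attraction of $P^\circ$ under the pseudo-gradient flow of Lemma \ref{lem-1}, shows $\pa V_1$ is a flow-invariant set contained in $X_0(\Om)\setminus(P\cup(-P))$, and observes that $\Ga_2$ must cross $\pa V_1$, so $\inf_{\pa V_1}E<0$; minimizing $E$ on this invariant set (using (PS) and boundedness below) yields the sign-changing $u_3$ with \emph{negative} energy. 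If you want to keep a minimax flavour, you would need to replace your ``positive barrier near $0$'' by this separating invariant set $\pa V_1$ and argue that the descending flow cannot push a negative-energy initial datum on $\pa V_1$ into either cone; as written, the step ``$c>0$'' and hence the exclusion of $0$ and of the cones at level $c$ does not go through.
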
	

\begin{theorem}\label{Theorem-2}
	Let $(a_1,d_1) \in S \setminus \Sigma_p^s(\Om), a_0,d_0 < \la_1$ and
	$h$ satisfy the assumptions $(A_1)$,$(A_2)$ and $(A_3)$.  Then problem $(\mathcal{P})$ has at least three non-trivial solutions, at least one is positive, one is negative and one is sign-changing.
\end{theorem}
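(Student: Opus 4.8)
The plan is to realise the three solutions as critical points of the $C^{1}$ energy functional
\begin{equation*}
  J(u)=\frac1p\,\norm{u}_{X_{0}(\Om)}^{p}-\Iom H(u)\,dx,\qquad H(t):=\int_{0}^{t}h(\tau)\,d\tau ,
\end{equation*}
on the uniformly convex Banach space $X_{0}(\Om)$, and to run a minimax scheme relative to a pseudo-gradient vector field that leaves small neighbourhoods of the positive and negative cones invariant under its negative flow. Since $N<sp$, the embedding $X_{0}(\Om)\hookrightarrow C^{0,\al}(\overline{\Om})$ is compact, and I would use three consequences of this: the Nemytskii operator $u\mapsto h(u)$ (well defined and continuous by $(A_{3})$) maps $X_{0}(\Om)$ compactly into its dual; every $u\in X_{0}(\Om)$ obeys an $L^{\infty}$ bound, so no growth restriction on $h$ is needed; and the positive and negative cones of $X_{0}(\Om)$ have non-empty interior in the order structure inherited from $C^{0,\al}(\overline{\Om})$, which is the natural setting for invariant-cone critical point theory. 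Combined with the $(S_{+})$ property of $(-\De)^{s}_{p}\colon X_{0}(\Om)\to X_{0}(\Om)^{*}$, this gives that every bounded Cerami sequence for $J$ has a strongly convergent subsequence. To upgrade to the full Cerami condition I would argue by contradiction: if $(u_{n})$ were an unbounded Cerami sequence, then dividing $J'(u_{n})=o(1)$ by $\norm{u_{n}}_{X_{0}(\Om)}^{p-1}$ and using $(A_{2})$, $(A_{3})$ and again $(S_{+})$, the normalised sequence $v_{n}=u_{n}/\norm{u_{n}}_{X_{0}(\Om)}$ would converge strongly to some $v$ with $\norm{v}_{X_{0}(\Om)}=1$ solving (\ref{P-1}) with $(a,d)=(a_{1},d_{1})$, contradicting $(a_{1},d_{1})\notin\Sigma_{p}^{s}(\Om)$. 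Hence $J$ satisfies $(C)_{c}$ at every level.

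Next I would pin down the geometry of $J$. From $(A_{1})$, $(A_{3})$ and $a_{0},d_{0}<\la_{1}$ one obtains, for each $r>p$, a constant with $H(t)\le\frac1p(\la_{1}-\var)|t|^{p}+C_{r}|t|^{r}$, so by the variational definition of $\la_{1}$, $J(u)\ge\frac{\var}{p\la_{1}}\norm{u}_{X_{0}(\Om)}^{p}-C\norm{u}_{X_{0}(\Om)}^{r}$; thus $J(0)=0$ and there are $\rho,\ba>0$ with $J\ge\ba$ on $\set{\norm{u}_{X_{0}(\Om)}=\rho}$, i.e.\ $0$ is a strict local minimum. Since $(a_{1},d_{1})\in S$ lies strictly above the curve $\Ga$ of (\ref{Ga}), in particular $a_{1},d_{1}>\la_{1}$; using the first eigenfunction $\phi_{1}>0$ of (\ref{P--}) (cf.\ \cite{Lindqvist}) together with $(A_{2})$ one gets $J(\pm t\phi_{1})\to-\infty$ as $t\to+\infty$, while the strict position of $(a_{1},d_{1})$ above $\Ga$ also produces a finite-dimensional linking set made of sign-changing functions on which $J\to-\infty$; this will feed the third critical point.

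The core construction — and the step I expect to be the main obstacle — is the cone-preserving pseudo-gradient field. Put $P^{+}=\set{u\in X_{0}(\Om):u\ge0}$, $P^{-}=-P^{+}$, $D^{\pm}_{\mu}=\set{u\in X_{0}(\Om):\operatorname{dist}(u,P^{\pm})<\mu}$. One must construct a locally Lipschitz $V$ on $X_{0}(\Om)$ minus the critical set which is a pseudo-gradient for $J$ and for which $\operatorname{dist}(u-tV(u),P^{\pm})$ stays $<\mu$ whenever $u\in D^{\pm}_{\mu}$, so that $D^{\pm}_{\mu}$ are positively invariant under $\dot\si=-V(\si)$, together with the fact that $J$ has no critical point in $\overline{D^{\pm}_{\mu}}\setminus P^{\pm}$. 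In the local case this is done by pointwise truncation of $u$, exploiting that $\De u(x)$ depends only on $u$ near $x$; for the nonlocal operator the sign of $(-\De)^{s}_{p}u$ at a point feels $u$ on all of $\R^{N}$, so one instead tests the equation against $u^{-}$ (resp.\ $u^{+}$) and uses the elementary inequality
\begin{equation*}
  |a-b|^{p-2}(a-b)(a^{-}-b^{-})\le-|a^{-}-b^{-}|^{p},\qquad a,b\in\R ,
\end{equation*}
which yields $\langle(-\De)^{s}_{p}u,u^{-}\rangle\le-\norm{u^{-}}_{X_{0}(\Om)}^{p}$, i.e.\ the principal part of the gradient drives $u^{-}$ to zero; the lower-order term $\Iom h(u)u^{-}\,dx$ is controlled near $P^{+}$ by $h(0)=0$, the local Lipschitz bound of $(A_{3})$ and the $L^{\infty}$ bound afforded by $N<sp$, and one closes the argument with a differential inequality for $t\mapsto\operatorname{dist}(\si(t),P^{\pm})$. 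Carrying out this estimate rigorously in the fractional setting, uniformly in $p>1$, is the technical heart of the proof.

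Granting such a field, the rest is standard. Inside the complete, positively invariant set $\overline{D^{+}_{\mu}}$ the functional $J$ has mountain-pass geometry ($J(0)=0$, $J\ge\ba$ on a sphere, $J(t_{0}\phi_{1})<0$ for $t_{0}$ large), so there is $u_{1}\in\overline{D^{+}_{\mu}}$ with $J(u_{1})\ge\ba>0$ and $J'(u_{1})=0$; since $\overline{D^{+}_{\mu}}$ contains no critical point off $P^{+}$, $u_{1}\in P^{+}\setminus\set0$, and the strong maximum principle for $(-\De)^{s}_{p}$ gives $u_{1}>0$ in $\Om$. Symmetrically $u_{2}<0$ in $\Om$, and clearly $u_{1}\ne u_{2}$. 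For the sign-changing solution I would take a minimax over paths in $X_{0}(\Om)$ joining $t_{0}\phi_{1}$ to $-t_{0}\phi_{1}$ (or over the finite-dimensional linking set found above), of value $c\ge\ba$; the invariant-set minimax method in the spirit of \cite{Zhang-Li} — using the invariance of $D^{\pm}_{\mu}$, the absence of critical points in $\overline{D^{\pm}_{\mu}}\setminus P^{\pm}$, and the $(C)_{c}$ condition — then yields a critical point $u_{3}$ lying outside $P^{+}\cup P^{-}$, hence sign-changing and distinct from $u_{1}$ and $u_{2}$. The two places where genuine work is required are the cone-invariance estimate just described and the boundedness of Cerami sequences via $(a_{1},d_{1})\notin\Sigma_{p}^{s}(\Om)$.
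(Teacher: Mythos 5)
Your outline is sound in its overall philosophy and shares the paper's two load-bearing ingredients -- compactness via non-resonance with $\Sigma_p^s(\Om)$ (your Cerami argument is essentially the paper's Lemma \ref{lem-3}) and a pseudo-gradient field whose descending flow respects a cone structure (the paper's Lemma \ref{lem-1}) -- but the mechanism you use to extract the three solutions, and especially the sign-changing one, is genuinely different. You work with the $\mu$-neighbourhoods $D^\pm_\mu$ of the cones, prove their positive invariance by testing against $u^\mp$ and the elementary inequality for $|a-b|^{p-2}(a-b)(a^--b^-)$, and then run a mountain-pass inside $\overline{D^+_\mu}$, inside $\overline{D^-_\mu}$, and a minimax over paths joining $t_0\phi_1$ to $-t_0\phi_1$ in the style of Liu--Sun and Bartsch--Liu--Weth. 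The paper instead exploits $N<sp$ to get $X_0(\Om)\hookrightarrow C^{0,\al}(\bar\Om)$, so that $P$ itself has non-empty interior, builds the pseudo-gradient so that $P$, $-P$, $P^\circ$, $-P^\circ$ are invariant, and obtains the third solution dynamically: since $a_0,d_0<\la_1$, the origin is a strict local minimum, its basin of attraction $U_1$ under the flow is an open invariant set, $E$ is bounded below by a positive constant on the invariant set $\pa U_1$, and a connectivity lemma of Dancer applied to the surface $Z=\{tu: t\ge 0,\ u\in\Ga\}$ produces a connected component $\Theta\subset\pa U_1\cap Z$ meeting $P^\circ$, $-P^\circ$ and their complement; the flow restricted to $\Theta$ then converges to the three critical points. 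Your route buys a more standard, self-contained minimax statement; the paper's route avoids any discussion of whether paths between the two cones can hide inside $D^+_\mu\cup D^-_\mu$, at the cost of the topological lemma on $\pa U_1$.

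One step of yours I would not let stand as written: the assertion that the minimax over paths from $t_0\phi_1$ to $-t_0\phi_1$ has value $c\ge\ba$. The mountain-pass geometry at $0$ forces $J\ge\ba$ only on paths that pass near the origin, and a path joining $t_0\phi_1$ to $-t_0\phi_1$ need not do so; all one can guarantee (since any such path lying in $D^+_\mu\cup D^-_\mu$ must meet $D^+_\mu\cap D^-_\mu$, where $|u|_p$ is small and hence $J\ge -C\mu^p$) is $c\ge -C\mu^p>\max\{J(\pm t_0\phi_1)\}$. This does not break your argument -- the critical point produced by the invariant-set minimax is sign-changing because it lies outside $P^+\cup P^-$, and that alone distinguishes it from $u_1$ and $u_2$ -- but the claimed lower bound $c\ge\ba$ is unjustified and should be replaced by the weaker, correct one.
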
	
Let us briefly describe the contents of this paper: Section-\ref{S-1} consists of the preliminaries followed by some notations and definitions. In Section-\ref{S-2}, we meticulously
construct a suitable pseudo-gradient vector field which is a pivotal tool to establish our main results. Section-\ref{S-3} provides the details about the corresponding energy functional satisfying Palais-Smale condition. In Section-\ref{S-4}, we provide a technical lemma dealing with the construction of a curve which connects the interior of positive and negative cones. Finally, in Section-\ref{S-5}, we prove our main results.

\section{Preliminaries}\label{S-1}
\textit{Energy functionals}:
Let us consider the Euler-Lagrange energy functionals corresponding to (\ref{P-1}) is given by: $E_{(a,d)} : X_0(\Om) \to \R$ by
\begin{equation}\label{E-a,d}
E_{(a,d)}(u):=\frac{1}{p}\norm{u}_{X_0(\Om)}^p-\frac{a}{p}|{u^+}|_p^p-\frac{d}{p}|{u^-}|_p^p,\,\ \text{for all}\,\ u \in X_0(\Om),
\end{equation}
and $E:X_0(\Om) \to \R$, corresponding to $(\mathcal{P})$, by
\begin{equation}\label{E}
E(u)=\frac{1}{p}\norm{u}_{X_0(\Om)}^p-\Iom H(u), \,\ \text{for all}\,\ u \in X_0(\Om),
\end{equation}
where $H(t)=\int_{0}^{t}h(s)ds.$ It is well-known that both $E_{(a,d)}, E \in C^1(X_0(\Om),\R)$ with 
\bea\label{E'-a,d}
\<E'_{(a,d)}(u),\phi \>&=&\int_{\R^{2N}}\frac{|u(x)-u(y)|^{p-2}(u(x)-u(y))((\phi(x)-\phi(y))}{|x-y|^{N+ps}}dxdy\no\\
&-&a\Iom (u^+)^{p-1}\phi dx-d\Iom (u^-)^{p-1}\phi dx,
\eea
for all $\phi \in X_0(\Om)$ and 
\bea\label{E'}
\<E'(u),\phi \>&=&\int_{\R^{2N}}\frac{|u(x)-u(y)|^{p-2}(u(x)-u(y))((\phi(x)-\phi(y))}{|x-y|^{N+ps}}dxdy\no\\
&-&\Iom h(u) \phi dx, \,\ \text{for all}\,\ \phi \in X_0(\Om).
\eea
We know that the critical points of $E_{(a,d)}$ and $E$ are solutions of (\ref{P-1}) and $(\mathcal{P})$ respectively.
\subsection{Notations}: We list below some notations we used throughout the paper: 
\begin{itemize}
	\item[-]
	$|u|_p$ denotes the norm in the space $L^p(\Om).$
	\item [-]
	$\Sigma_p^s(\Om)$ denotes the Dancer-Fu\v cik spectrum of $(-\De)^s_p$ in $\Om$.
	\item [-]
	$P^\circ$ denotes Interior of $P$ in $X_0(\Om).$
	\item [-]
	for $t \in \R,$ $t^+:=\max\{0,t\}$ and $t^-:=\max\{0,-t\}.$
	
\end{itemize}	


\subsection{Some definitions}:
We recall  some definitions which are needed to establish our results. 

\begin{definition}
	Let $X$ be a Banach space, $\phi \in C^1(X,\R)$ and $Y=\{u \in X: \phi'(u) \neq 0  \}.$ A pseudo-gradient vector field for $\phi$ on $Y$ is a locally Lipschitz continuous mapping $V:Y \to X$ such that there exists $\al,\ba >0$ such that for every $u \in Y,$ one has
	$$\norm{V(u)} \leq \al \norm{\phi'(u)},\,\ \<\phi'(u),V(u)\> \geq \ba \norm{\phi'(u)}^2.$$
\end{definition}

	Let $\mathcal{M}=\{u \in X_0(\Om): E'(u)=0\}.$ Let $u(t,u_0),$ for $0 \leq t< \eta(u_0),$ be the solution of the initial value problem
 \begin{equation}\label{V-u}
 	\left\{\begin{aligned}
\frac{du}{dt}(t)&=-V(u(t)),\quad t\in [0,\eta(u_0)), \\
 u(0) &= u_0,  \quad\text{in }\quad X_0(\Om) \setminus \mathcal{M},
 \end{aligned}
 \right.
 \end{equation}
	where $V(u)$ is the pseudogradient of $E$ in Banach space $X_0(\Om),$ and $\eta(u_0)$ is the maximum of the interval of existence of $u$). As $V$ is locally Lipschitz continuous, equation (\ref{V-u}) has unique global solution $u(t,u_0).$
\begin{definition}
A subset $\mathcal{N} \subset W$ is an invariant set of descent flow of $E$ if $\{u(t,u_0): 0\leq t< \eta(u_0), u_0 \in \mathcal{N} \setminus \mathcal{M} \} \subset \mathcal{N}.$	
\end{definition}
For $0<\al\leq 1,$ we consider the space $C^{0,\al}(\bar{\Om})$ with the usual norm
$$|u|_{C^{0,\al}(\bar{\Om})}= \sup_{x \in \bar{\Om}} |u(x)|+[u]_{\al,\Om},$$
where 
$$
[u]_{\al,\Om}:=\sup_{x,y\in\Om,\,x\neq y}\frac{|u(x)-u(y)|}{|x-y|^\al}.
$$
Let us denote $C^{0,\al}_0(\bar{\Om}):=C^{0,\al}(\bar{\Om})\cap C_0(\bar{\Om}),$ where $C_0(\bar{\Om}):=\{u\in C(\Real^N)|\, u=0\mbox{ in }\Real^N\setminus \Om\}.$
The following embeddings are dense:
$C^{0,\ga}_0(\bar{\Om}) \hookrightarrow X_0(\Om) \hookrightarrow C^{0,\al}_0(\bar{\Om})  \,\ \text{for}\,\ 0<\al=s-\f{N}{p}<\ga \leq 1.$
Let us define the cone of non-negative functions:
\begin{equation}\label{P}
P:=\big\{ u \in X_0(\Om): u(x) \geq 0 \,\ \text{for all}\,\ x \in \Om \big \}.
\end{equation}
		Let $\pa P$ be the boundary of $P$ in $X_0(\Om)$. We denote
$$P_{C^{0,\ga}_0(\bar{\Om})}=\bigg\{u \in C^{0,\ga}_0(\bar{\Om}): u(x) >0 \, \text{for all}\, x \in \Om     \bigg\} . $$
We note that, $P_{C^{0,\ga}_0(\bar{\Om})}$ has non-empty interior, denoted by $P^{\circ}_{C^{0,\ga}_0(\bar{\Om})}$ and it's boundary by $\pa P_{C^{0,\ga}_0(\bar{\Om})}$.
%
%
%
Using the similar argument as in \cite{Zhang-Li}, it is easy to note that 
	$\pa P_{C^{0,\ga}_0(\bar{\Om})}$ is dense in $\pa P.$ 
				Indeed, let $u_0 \in \pa P$ and $V_{u_0}$ be an open ball of $u_0$ in $X_0(\Om)$. As $C^{0,\ga}_0(\bar{\Om})$ is dense in $X_0(\Om)$, there exists $u_1 \in C^{0,\ga}_0(\bar{\Om})$ and $P^{\circ}_{C^{0,\ga}_0(\bar{\Om})} \cap V_{u_0}$ and $u_2 \in \big(C^{0,\ga}_0(\bar{\Om}) \setminus P^\circ_{C^{0,\ga}_0(\bar{\Om})}\big)  \cap V_{u_0}.$ Then, we have,
			$tu_1+(1-t)u_2 \in C^{0,\ga}_0(\bar{\Om}) \cap V_{u_0}$ for all $t \in [0,1].$
		By continuity of the expression $tu_1+(1-t)u_2, t\in [0,1]$ w.r.t. $t$, there exists $t_0 \in [0,1]$ such that,	
			$t_0 u_1+(1-t_0)u_2 \in \pa P_{ C^{0,\ga}_0(\bar{\Om})} \cap V_{u_0}$. 

\section{Construction of pseudo-gradient vector field}\label{S-2}
In this section, we prove the following important lemma which is a key-tool to obtain our main results. To be precise, we construct the suitable pseudo-gradient field such that
$$\lim_{m \to 0^+} \frac{1}{m}\text{dist}(u+m(-V(u),P))=0,$$
for all $u \in B(u_0) \cap P, u_0 \in \pa P \setminus \mathcal{M},$ where $P$ is given in (\ref{P}).
To this motive, we state the following lemma.

\begin{lemma}\label{lem-1}
	Let $E$ be defined  in (\ref{E}). Then there exists a pseudo-gradient of $E$ such that
	the following holds:
	\begin{itemize}
		\item [(i)]
	 $P,-P$ are invariant sets of descent flow of $E$; 
	 \item [(ii)]
	  $u(t,u_0) \in {P^\circ}$ (or $-P^\circ$ ) for all $t>0$ and for all $u_0 \in P$ (or $-P$),
	  where $u(t,u_0)$ is the solution of (\ref{V-u});
	  \item [(iii)]
	  $P^\circ,-P^\circ$ remains invariant under the descent flow of $E.$
	 \end{itemize}
\end{lemma}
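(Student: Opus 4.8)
The plan is to construct the pseudo-gradient vector field explicitly by modifying the naive gradient so that it points ``inward'' near $\partial P$ and $-\partial P$, while retaining the two defining inequalities of a pseudo-gradient. First I would recall the standard construction: on $Y = \{u : E'(u) \neq 0\}$ one covers $Y$ by a locally finite open cover on which one can choose, near each point $u$, a ``good direction'' $w$ with $\|w\| < 2\|E'(u)\|$ and $\langle E'(u), w\rangle > \tfrac12 \|E'(u)\|^2$, then patches these with a locally Lipschitz partition of unity. The new ingredient here, following \cite{Zhang-Li}, is the choice of good direction near points of $\partial P \setminus \mathcal{M}$: instead of a direction built from $E'(u)$ in $X_0(\Om)$, one uses the solution operator of $(-\De)^s_p$. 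Concretely, for $u$ near $\partial P$, one solves $(-\De)^s_p v = h(u)$ (or a regularized version), so that $v \in C^{0,\ga}_0(\bar\Om)$ by the regularity/embedding statement, and takes $w = u - v$ as the descent direction. Because $h(u)u \ge 0$ and $h(0)=0$ (assumption $(A_3)$), when $u \ge 0$ one has $h(u) \ge 0$ a.e., so by the strong maximum principle for the fractional $p$-Laplacian $v$ lies in $P^\circ_{C^{0,\ga}_0(\bar\Om)}$; this is what forces the flow to enter $P^\circ$.

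Next I would verify the three conclusions in order. For (i), the key estimate is $\lim_{m\to 0^+}\tfrac1m \operatorname{dist}(u + m(-V(u)), P) = 0$ for $u \in P$ near $\partial P \setminus \mathcal{M}$; this is exactly the ``quasi-tangency'' condition that, via a Nagumo-type invariance theorem for ODEs in Banach spaces with the closed convex set $P$, yields that $P$ (and by the odd symmetry $h(-t) = -h(t)$-type behavior, or rather by repeating the argument, $-P$) is forward-invariant under the flow of \eqref{V-u}. One checks the distance estimate by writing $u + m(-V(u)) = u + m(v - u) + m(\text{error from patching})$; since $v \in P^\circ$ (indeed in the interior of the cone in the stronger Hölder topology, hence with room to spare in $X_0(\Om)$), the point $(1-m)u + mv$ stays in $P$ for small $m$, and the patching error is controlled because all the local directions used near $\partial P$ share this inward-pointing feature. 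For (ii), once a trajectory starting in $P$ is known to stay in $P$, one uses that for $t > 0$ the vector field genuinely pushes strictly inward — again because $v \in P^\circ_{C^{0,\ga}_0(\bar\Om)}$ with the continuous embedding $C^{0,\ga}_0(\bar\Om) \hookrightarrow X_0(\Om)$ giving an $X_0(\Om)$-ball around $v$ inside $P$ — so $u(t,u_0)$ is separated from $\partial P$ for every $t>0$; a Gronwall/continuity argument upgrades this to $u(t,u_0) \in P^\circ$. Part (iii) is then immediate: if $u_0 \in P^\circ$ the trajectory starts in the open set $P$ and by (ii) stays in $P^\circ$ for all $t>0$, so $P^\circ$ and $-P^\circ$ are invariant.

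The main obstacle, and the place where the ``non-local difficulties'' advertised in the introduction really bite, is proving that the inward-pointing direction $w = u - v$ can be glued into a \emph{globally defined, locally Lipschitz} vector field on all of $Y$ that simultaneously (a) satisfies the two pseudo-gradient inequalities everywhere, and (b) retains the quasi-tangency property $\operatorname{dist}(u+m(-V(u)),P) = o(m)$ uniformly for $u$ in a neighborhood of each point of $\partial P \setminus \mathcal{M}$. In the local case $\Delta u(x)$ depends only on values of $u$ near $x$, so sign information propagates locally; here $v$ depends on $u$ over the whole of $\Om$ (indeed of $\R^N$), so one must be careful that the regularity $v \in C^{0,\ga}_0(\bar\Om)$ and the strict positivity $v \in P^\circ_{C^{0,\ga}_0(\bar\Om)}$ are \emph{stable} under small perturbations of $u$ in the $X_0(\Om)$-norm — this uses continuity of the solution map for $(-\De)^s_p$ together with the density chain $C^{0,\ga}_0(\bar\Om) \hookrightarrow X_0(\Om) \hookrightarrow C^{0,\al}_0(\bar\Om)$ and the observation (already proved in the excerpt) that $\partial P_{C^{0,\ga}_0(\bar\Om)}$ is dense in $\partial P$. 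I would isolate this as the technical heart of the argument: establish a lemma that near each $u_0 \in \partial P \setminus \mathcal{M}$ there is a ball $B(u_0)$ and a single Lipschitz direction field whose values at $u \in B(u_0)\cap P$ point into $P$ with a uniform margin, then feed these into the standard partition-of-unity construction, checking that mixing an inward-pointing direction with a generic pseudo-gradient direction away from $\partial P$ (where tangency is vacuous) preserves all required inequalities.
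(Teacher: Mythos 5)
Your overall architecture (local inward\hbox{-}pointing directions near $\pa P\setminus\mathcal M$, a locally Lipschitz partition of unity, the Nagumo-type quasi-tangency condition $\mathrm{dist}(u+m(-V(u)),P)=o(m)$, then (i)$\Rightarrow$(ii)$\Rightarrow$(iii)) matches the paper. But the single step on which everything rests --- the choice of the local direction --- is where your proposal has a genuine gap. You take $w=u-v$ with $(-\De)^s_p v=h(u)$ and feed it into the pseudo-gradient machinery. For $p=2$ this is the classical inverse-operator construction, because then $E'(u)$ is represented in the Hilbert space by exactly $u-v$, so both defining inequalities $\norm{w}\le\al\norm{E'(u)}$ and $\<E'(u),w\>\ge\ba\norm{E'(u)}^2$ hold automatically. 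For $p\neq2$ neither is available: $\norm{E'(u)}_{X_0'(\Om)}$ scales like $\norm{u}_{X_0(\Om)}^{p-1}$ while $\norm{u-v}_{X_0(\Om)}$ scales like $\norm{u}_{X_0(\Om)}$, so the upper bound fails on one side of $\norm{u}_{X_0(\Om)}=1$; and the lower bound would require the normalized direction $(u-v)/\norm{u-v}$ to nearly norm the functional $E'(u)$, which is false in general because $(-\De)^s_p$ is nonlinear and its monotonicity inequality $\<(-\De)^s_pu-(-\De)^s_pv,u-v\>\gtrsim\norm{u-v}^p$ degenerates (for $p>2$) when $u-v$ is small compared with $u$. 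Rescaling $u-v$ by a positive scalar preserves the inward-pointing property but cannot repair the angle condition. So the field you build need not be a pseudo-gradient at all, and the deformation and invariance theory you invoke afterwards does not apply to it.

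The paper goes the other way around: it starts from a direction $w_0$ with $\norm{w_0}_{X_0(\Om)}=1$ and $\<E'(u_0),w_0\>>\frac45\norm{E'(u_0)}$ (which exists by definition of the dual norm, so the pseudo-gradient inequalities are automatic), and then perturbs it --- first to $w_1=w_0-\eps$, then to a $w_2$ that is negative on a boundary collar $\Om_\de$ and on the nodal set of $u_0$ --- so that $u_0-\la w_2\in P^\circ$ for small $\la>0$, while checking that the perturbation costs at most $(\frac45-\frac23)\norm{E'(u_0)}$ in the pairing. Controlling that cost is where the advertised non-local difficulty actually sits: the paper decomposes $\R^{2N}$ into nine regions $\mathcal D_1,\dots,\mathcal D_9$ built from $\Om\setminus\Om_\de$, $\Om_\de$ and $\Rn\setminus\Om$ and estimates the cross terms separately; and in the case where $u_0$ vanishes at an interior point it first shows $w_0$ can be taken supported in $\Om_0=\{u_0>0\}$ (otherwise $u_0$ would weakly solve the equation on $\Om_0$ and $E'(u_0)$ would vanish against every test function, contradicting $u_0\notin\mathcal M$). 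If you want to salvage the inverse-operator route for $p\neq2$, you must prove the angle estimate for $u-v$ from scratch; as written, the proposal does not establish the lemma.
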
	
\begin{proof}
	We will first construct a pseudo-gradient vector field $V$ in $X_0(\Om).$ We take
	\begin{equation}\label{B}
	B=\{ u_0 \in \pa P : E'(u_0) \neq 0, \exists  w_0 \in X_0(\Om) \,\ \text{s.t.}\,\ \norm{w_0}_{X_0(\Om)}=1\,\ \text{and}\,\ \<E'(u_0),w_0\> > \frac{4}{5}\norm{E'(u_0)} \}.
	\end{equation}
	Using the density of $C^\ga_0(\bar{\Om})$ in $X_0(\Om),$ it suffices to consider $u \in C^\ga_0(\bar{\Om}).$
	
	\textit{Suppose $u_0(x)>0$ for all $x \in \Om$ and $u_0-\la_0w_0 \in P^\circ$ for some $\la_0>0.$}\\
	Under this assumption, it is easy to note that
	$u_0-\la w_0 \in P^\circ$ for all $\la \in (0,\la_0].$
	Let us take $v_0=\frac{3}{2}\norm{E'(u_0)}w_0.$
	Then, $\norm{v_0}_{X_0(\Om)}=\frac{3}{2}\norm{E'(u_0)}<2\norm{E'(u_0)},$
	since $\norm{w_0}_{X_0(\Om)}=1$ and
	\Bea
	\<E'(u_0),v_0\>&=&\frac{3}{2}\norm{E'(u_0)}\<E'(u_0),w_0\>>\frac{1}{2}\norm{E'(u_0)}^2.\\
	\Eea
	Also, $u_0-m_0v_0 \in P^\circ$ for some $m_0>0$ sufficiently small. Infact taking $m_0=\frac{2}{3}\frac{\la_0}{\norm{E'(u_0)}}$ from hypothesis, we have 
	$u_0-\frac{2\la_0}{3\norm{E'(u_0)}}v_0=u_0-\la_0w_0 \in P^\circ.$ 
	By the continuity of $E',$ we obtain, 
	$$\norm{v_0}_{X_0(\Om)} < 2\norm{E'(u)},\,\, \<E'(u),v_0\>>\frac{1}{2}\norm{E'(u)}^2,\quad\text{for all}\quad u \in N^\circ(u_0),$$
for some open neighborhood $N^\circ(u_0) \subset X_0(\Om).$
	There exists an open neighborhood $N(u_0) \subset X_0(\Om)$ such that 
	$u-mv_0 \in P^{\circ}$for all $m \in (0,m_0]$ and for	all $u \in N(u_0) \cap P.$ 

\textit{If $u_0(x)>0$ for all $x \in \Om$ and $u_0-\la w_0 \notin P^\circ$ for any $\la>0$ (sufficiently small)},	
then there exists $x_\la \in \Om$ such that $u_0(x_\la)-\la w_0(x_\la)<0$ and  as $u_0(x)>0,$ so $\text{dist}(x_\la,\pa\Om) \to 0$ as $\la \to 0^+.$ Using
Hahn-Banach theorem, the linear functional $E'(u_0)$ can be extended to linear functional on $W^{s,p}(\Om).$ Also, it is easy to see that for $w_1=w_0-\eps$ satisfies (\ref{B}) for $\eps>0$ sufficiently small. 	Indeed, we have $\<E'(u_0),w_0\> >\frac{4}{5}\norm{E'(u_0)}$ and for $w_1=w_0 -\eps,$ we see that, for $\eps$ sufficiently small, using (\ref{B}), we get,
	\Bea
	\<E'(u_0),w_1\>&=&\int_{\R^{2N}}\frac{|u_0(x)-u_0(y)|^{p-2}(u_0(x)-u_0(y))(w_1(x)-w_1(y))}{|x-y|^{N+ps}}dxdy\\
	&-&\Iom h(u_0)w_1 dx,\\
	&=&\int_{\R^{2N}}\frac{|u_0(x)-u_0(y)|^{p-2}(u_0(x)-u_0(y))(w_0(x)-w_0(y))}{|x-y|^{N+ps}}dxdy\\
	&-&\Iom h(u_0)w_0 dx+\eps \Iom h(u_0)dx,\\
	&=&\<E'(u_0,w_0\>+\eps \Iom h(u_0)>\frac{4}{5}\norm{E'(u_0)}.
	\Eea  
	This implies, $w_1=w_0-\eps$ satisfies (\ref{B}) for sufficiently small $\eps>0.$
%
	Thus, we note that,
	\begin{equation}
	\label{eq:*1}
	\int_{\R^{2N}}\frac{|u_0(x)-u_0(y)|^{p-2}(u_0(x)-u_0(y)(w_1(x)-w_1(y)}{|x-y|^{N+ps}}dxdy-\int_{\Om \setminus \Om_\de}h(u_0)w_1dx >\frac{4}{5}\norm{E'(u_0)},
	\end{equation}
	where $\Om_\de$  is a neighborhood of $\pa\Om$ in $\Om$ for sufficiently small $\delta>0$ such that $w_1(x)<0$ for all $x \in \Om_\de.$
 We denote:
\begin{equation*}
\begin{aligned}
\R^{2N}=\Rn \times \Rn
&=\big( (\Om \setminus \Om_\de)  \cup \Om_\de \cup (\Rn \setminus \Om) \big) \times \big( (\Om \setminus \Om_\de)  \cup \Om_\de \cup (\Rn \setminus \Om) \big)
:=\bigcup_{i=1}^9 \mathcal{D}_i,
\end{aligned}
\end{equation*}
where
\begin{align*}
&\mathcal{D}_1:=(\Om \setminus \Om_\de)  \times (\Om \setminus \Om_\de),\,
\mathcal{D}_2:= (\Om \setminus \Om_\de) \times  \Om_\de,
\mathcal{D}_3:= (\Om \setminus \Om_\de) \times(\Rn \setminus \Om)  ,\\ 
&\mathcal{D}_4:= \Om_\de  \times (\Om \setminus \Om_\de),\,
\mathcal{D}_5:= \Om_\de  \times  \Om_\de,\, \mathcal{D}_6:=\Om_\de  \times (\Rn \setminus \Om),\\
&
\mathcal{D}_7:=(\Rn \setminus \Om)  \times (\Om \setminus \Om_\de),\,
\mathcal{D}_8:=(\Rn \setminus \Om)  \times  \Om_\de,\,
\mathcal{D}_9:=(\Rn \setminus \Om)  \times (\Rn \setminus \Om).
\end{align*}
Therefore,
\begin{equation*}
\begin{aligned}
&\int_{\R^{2N}}\frac{|u_0(x)-u_0(y)|^{p-2}(u_0(x)-u_0(y)) (w_1(x)-w_1(y))}{|x-y|^{N+sp}}dxdy\\
&=\sum_{i=1}^9 \int_{\mathcal{D}_i}\frac{|u_0(x)-u_0(y)|^{p-2}(u_0(x)-u_0(y)) (w_1(x)-w_1(y))}{|x-y|^{N+sp}}dxdy,
\end{aligned}
\end{equation*}
where $\mathcal{D}_i$'s are defined above. As $w_0 \in X_0(\Om), w_1 \equiv -\eps $ on $\Rn \setminus \Om,$ hence,
\begin{equation*}
\int_{\mathcal{D}_9}\frac{|u_0(x)-u_0(y)|^{p-2}(u_0(x)-u_0(y)) (w_1(x)-w_1(y))}{|x-y|^{N+sp}}dxdy=0.
\end{equation*}
Using the above observations, we have from \eqref{eq:*1} that
\begin{equation}
\begin{aligned}
&\sum_{i=1}^8 \int_{\mathcal{D}_i}\frac{|u_0(x)-u_0(y)|^{p-2}(u_0(x)-u_0(y)) (w_1(x)-w_1(y))}{|x-y|^{N+sp}}dxdy\\&-\int_{\Om \setminus \Om_\de} h(u_0)w_1 dx> \frac{4}{5}\|E'(u_0)\|.
\end{aligned}
\end{equation}
As $w_1(x)=-\eps<0$ in $\Rn \setminus \Om,$ therefore, choosing $\eps>0$ further small enough (if necessary), there exists neighborhood $\Om_\de$ of $\pa \Om$ in $\Om$ for $\de>0$ small such that, $w_1(x)<0$ in $\Om_\de$ and
\begin{equation}\label{eq:*2}
\begin{aligned}
&\int_{\mathcal{D}_1}\frac{|u_0(x)-u_0(y)|^{p-2}(u_0(x)-u_0(y)) (w_1(x)-w_1(y))}{|x-y|^{N+sp}}dxdy\\
&+\int_{\mathcal{D}_3}\frac{|u_0(x)-u_0(y)|^{p-2}(u_0(x)-u_0(y)) (w_1(x)-w_1(y))}{|x-y|^{N+sp}}dxdy\\
&+\int_{\mathcal{D}_7}\frac{|u_0(x)-u_0(y)|^{p-2}(u_0(x)-u_0(y)) (w_1(x)-w_1(y))}{|x-y|^{N+sp}}dxdy\\
&-\int_{\Om \setminus \Om_\de} h(u_0)w_1 dx> \frac{4}{5}\|E'(u_0)\|.
\end{aligned}
\end{equation}
Thus, we may define $w_2 \in X_0(\Om)$ such that
	\begin{equation*}
w_2(x)=\begin{cases}
w_1(x), \forall x \in \Om \setminus \Om_\de,\\
<0 ,\,\ \forall x \in \Om_\de,\\
0,\,\, x \in \Rn \setminus \Om,
\end{cases}
\end{equation*}
with
\allowdisplaybreaks
\begin{align}\label{eq:*3}
&\bigg|\int_{\mathcal{D}_2}\frac{|u_0(x)-u_0(y)|^{p-2}(u_0(x)-u_0(y)) (w_2(x)-w_2(y))}{|x-y|^{N+sp}}dxdy\no\\
&+\int_{\mathcal{D}_4}\frac{|u_0(x)-u_0(y)|^{p-2}(u_0(x)-u_0(y)) (w_2(x)-w_2(y))}{|x-y|^{N+sp}}dxdy\no\\
&+\int_{\mathcal{D}_5}\frac{|u_0(x)-u_0(y)|^{p-2}(u_0(x)-u_0(y)) (w_2(x)-w_2(y))}{|x-y|^{N+sp}}dxdy\no\\
&+\int_{\mathcal{D}_6}\frac{|u_0(x)-u_0(y)|^{p-2}(u_0(x)-u_0(y)) (w_2(x)-w_2(y))}{|x-y|^{N+sp}}dxdy\no\\
&+\int_{\mathcal{D}_8}\frac{|u_0(x)-u_0(y)|^{p-2}(u_0(x)-u_0(y)) (w_2(x)-w_2(y))}{|x-y|^{N+sp}}dxdy\no\\
&-\int_{ \Om_\de} h(w_2)w_2 dx\bigg| < (\frac{4}{5} -\frac{2}{3})\|E'(u_0)\|,
\end{align}
and $\|w_2\|<2$.
Therefore, (\ref{eq:*2}) and (\ref{eq:*3}) implies,
\begin{align*}
\<E'(u_0),w_2\>&=\int_{\R^{2N}}\frac{|u_0(x)-u_0(y)|^{p-2}(u_0(x)-u_0(y)) (w_2(x)-w_2(y))}{|x-y|^{N+sp}}dxdy\\&-\Iom h(u_0)w_2 dx\\
&=\sum_{i=1}^d \int_{\mathcal{D}_i}\frac{|u_0(x)-u_0(y)|^{p-2}(u_0(x)-u_0(y)) (w_2(x)-w_2(y))}{|x-y|^{N+sp}}dxdy\\&-\int_{\Om \setminus \Om_\de} h(u_0)w_2 dx-\int_{\Om_\de}h(u_0)w_2 dx\\
&=\int_{\mathcal{D}_1}\frac{|u_0(x)-u_0(y)|^{p-2}(u_0(x)-u_0(y)) (w_1(x)-w_1(y))}{|x-y|^{N+sp}}dxdy\\
&+\int_{\mathcal{D}_3}\frac{|u_0(x)-u_0(y)|^{p-2}(u_0(x)-u_0(y)) (w_1(x)-w_1(y))}{|x-y|^{N+sp}}dxdy\\
&+\int_{\mathcal{D}_7}\frac{|u_0(x)-u_0(y)|^{p-2}(u_0(x)-u_0(y)) (w_2(x)-w_2(y))}{|x-y|^{N+sp}}dxdy\\
&-\int_{\Om \setminus \Om_\de}h(u_0)w_1dx\\
&+\int_{\mathcal{D}_2}\frac{|u_0(x)-u_0(y)|^{p-2}(u_0(x)-u_0(y)) (w_2(x)-w_2(y))}{|x-y|^{N+sp}}dxdy\\
&+\int_{\mathcal{D}_4}\frac{|u_0(x)-u_0(y)|^{p-2}(u_0(x)-u_0(y)) (w_2(x)-w_2(y))}{|x-y|^{N+sp}}dxdy\\
&+\int_{\mathcal{D}_5}\frac{|u_0(x)-u_0(y)|^{p-2}(u_0(x)-u_0(y)) (w_2(x)-w_2(y))}{|x-y|^{N+sp}}dxdy\\
&+\int_{\mathcal{D}_6}\frac{|u_0(x)-u_0(y)|^{p-2}(u_0(x)-u_0(y)) (w_2(x)-w_2(y))}{|x-y|^{N+sp}}dxdy\\
&+\int_{\mathcal{D}_8}\frac{|u_0(x)-u_0(y)|^{p-2}(u_0(x)-u_0(y)) (w_2(x)-w_2(y))}{|x-y|^{N+sp}}dxdy\\
&-\int_{\Om_\de}h(u_0)w_2dx> (\frac{4}{5}-\frac{4}{5}+\frac{2}{3} ) \|E'(u_0)\|=\frac{2}{3}\|E'(u_0)\|.
\end{align*}
 We note that $u_0-m_0w_2 \in P^\circ$ for some $m_0>0$ sufficiently small. 
Considering $v_0=\frac{3}{2}\norm{E'(u_0)}\frac{w_2}{\norm{w_2}_{X_0(\Om)}},$ we note that $v_0$ satisfies:
$$\norm{v_0}_{X_0(\Om)} <2\norm{E'(u_0)},\,\, \<E'(u_0),v_0\> >\frac{1}{2}\norm{E'(u_0)}^2,\,\,
u_0-m_0v_0 \in P^\circ,$$ for some $m_0>0$ sufficiently small. By the continuity of $E'$,
 we have, $u-mv_0 \in P^\circ$ for all $m \in (0,m_0]$ and for all $u \in N(u_0) \cap P$ where $N(u_0)$ is a neighborhood of $u_0.$

Finally, \textit{suppose $u_0$ vanishes for some point $x_0 \in \Om.$}

Let $\Om_0:=\{x \in \Om:u_0(x)>0\}.$ As $u_0$ attains its interior minima at $x=x_0,$ hence $\na u_0(x_0)=0.$ There exists an open set $\Om_1$ with smooth boundary $\Om_0 \subset \Om_1 \subset \Om$ and a sequence $u_n \in C^\ga_0(\Om)$ such that
$$u_n|_{\Om_1} \in C^\ga_0(\bar{\Om}), u_n(x)>0, x \in \Om_1, u_n(x)=0, x \in \Om\setminus \Om_1 \,\ \text{for all}\,\ n=1,2,\cdots$$ and $\norm{u_n-u_0}_{X_0(\Om)} \to 0$ as $n \to \infty.$
Thus, the set $\big\{ u \in \pa P: \Om_0=\{ x \in \Om: u>0\}\,\ \text{has smooth boundary}\big\}$ is dense on $\pa P.$
So, let us assume $\Om_0$ has smooth boundary.

\textit{Claim}: We can choose $w_0$ in (\ref{B}) such that $w_0|_{\Om_0} \in X_0(\Om_0), w_0(x)=0$ for all $x \in \Om \setminus \Om_0:= \{y: u_0(y)=0 \}.$

We prove the claim by the method of contradiction. Suppose not, then for all $w \in X_0(\Om_0)$ such that with $w|_{\Om_0} \in X_0(\Om_0),$ $w(x)=0 $ for all $x \in \Om \setminus \Om_0, w$ does not satisfy (\ref{B}).
Hence, $E'(u_0)=0$ as a linear functional on the Banach space $X_0(\Om_0),$ that is, $u_0|_{\Om_0} \in X_0(\Om_0)$ 
is a positive weak solution of the equation
$$
\left\{\begin{aligned}
(-\De)^s_p u &=h(u), \quad\text{in }\quad \Om_0, \\
u &= 0, \quad \text{in}\quad \Rn \setminus \Om_0.
\end{aligned}
\right.
\leqno{(\mathcal{P}_*)}
$$
It is easy to check that $u_0|_{\Om_0}$ is a positive weak solution of $(\mathcal{P}_*).$ Using this, we can show that 
$\<E'(u_0),w\>=0$ for all $w \in X_0(\Om).$
		To see this, we calculate for $w \in X_0(\Om),$
		\Bea
		\<E'(u_0),w\>&=&\int_{\R^{2N}}\frac{|u_0(x)-u_0(y)|^{p-2}(u_0(x)-u_0(y))(w(x)-w(y))}{|x-y|^{N+sp}}dxdy\\
		&-&\int_{\R^{N}}h(u_0)w(x)dx.
		\Eea
		Using the fact that $u_0=0$ in $\Om \setminus \Om_0, w=0$ in $\Rn \setminus \Om,$ a simple computation immediately yields us,
		
		\bea\label{k-1}
		\<E'(u_0),w\>&=&\int_{\Om_0 \times \Om_0}\frac{|u_0(x)-u_0(y)|^{p-2}(u_0(x)-u_0(y))(w(x)-w(y))}{|x-y|^{N+sp}}dxdy\no\\
		&+&2\int_{\Rn \setminus \Om_0}\int_{\Om_0}\frac{|u_0(x)|^{p-2}u_0(x)(w(x)-w(y))}{|x-y|^{N+sp}}dxdy\no\\
		&-&\int_{\Om_0}h(u_0)w(x)dx.
		\eea
		We note that, $u_0$ satisfies:
		$(-\De)^s_p u_0=h(u_0).$ This evidently implies,
		\begin{equation}\label{k-2}
		\int_{\R^{2N}}\frac{|u_0(x)-u_0(y)|^{p-2}(u_0(x)-u_0(y))(w(x)-w(y))}{|x-y|^{N+sp}}dxdy=\int_{\Om_0}h(u_0)wdx.
		\end{equation}
		In a similar manner, using (\ref{k-2}), we conclude that,
		\begin{align}\label{k-3}
		&\int_{\Om_0 \times \Om_0}\frac{|u_0(x)-u_0(y)|^{p-2}(u_0(x)-u_0(y))(w(x)-w(y))}{|x-y|^{N+sp}}dxdy\no\\&+2	\int_{\Rn \setminus \Om_0}\int_{\Om_0}\frac{|u_0(x)|^{p-2}u_0(x)(w(x)-w(y))}{|x-y|^{N+sp}}dxdy\no\\
		&=\int_{\Om_0}h(u_0)w(x)dx.
		\end{align}	
		Equations (\ref{k-1}) and (\ref{k-3}) together commits,
		$\<E'(u_0),w\>=0.$ 
		For the element $w_0$ in (\ref{B}), we have
		$\<E'(u_0),w_0\>=0,$ which contradicts the condition of (\ref{B}). Hence, proved.
This evidently implies,
$\<E'(u_0),w_0\>=0,$ which contradicts the condition of (\ref{B}). This proves the claim.

Consequently, we can find $w_0 \in X_0(\Om_0)$ such that $\<E'(u_0),w_0\> >\frac{4}{5}\norm{E'(u_0)}.$ By the continuity of $E'(u_0),$ there exists $w_1 \in X_0(\Om)$ such that $w_1:=w_0-\eps \phi_1$ ($\eps$ sufficiently small), $\norm{w_1}_{X_0(\Om)} \leq 1+\eps$ satisfy (\ref{B}) and $w_1(x)<0$ for all $x \in \{y \in \Om : u_0(y)=0  \}.$ Then, we obtain for $\la>0$ sufficiently small, $u_0(x)-\la w_1(x)>0$ for all $x \in \Om.$ If $u_0-\la w_1 \notin P^\circ$ for $\la>0$ sufficiently small, we can modify $w_1$ as in the previous case.
Ergo, we can take $u_0-\la w_1 \in P^\circ$ for $\la>0$ sufficiently small. Considering $v_0=\frac{3}{2}\norm{E'(u_0)}\frac{w_1}{\norm{w_1}_{X_0(\Om)}}$, we get,
$$\norm{v_0}_{X_0(\Om)} < 2\norm{E'(u_0)},\,\, \<E'(u_0),v_0\> > \frac{1}{2}\norm{E'(u_0)}^2.$$
Using the continuity of $E'(u_0)$, it follows that 
\begin{equation}\label{v-0}
\norm{v_0}_{X_0(\Om)} <2\norm{E'(u)}\,\ \text{and}\,\ \<E'(u),v_0\> > \frac{1}{2}\norm{E'(u_0)}^2\,\ \text{for all}\,\ u \in N^\circ(u_0),
\end{equation}
where $N^\circ(u_0)$ is an open neighborhood of $u_0$ in $X_0(\Om).$
In a similar fashion, as above, we can show there exists an open neighborhood $N(u_0) \subset N^\circ(u_0) \subset X_0(\Om)$ and $m_0>0$ such that
$u-mv_0 \in P^\circ$ for all $u \in N(u_0)\cap P$ for all $m \in (0,m_0].$ By the above mentioned arguments, the family of open sets
$\big\{ N(u_0): u_0 \in \pa P \setminus \mathcal{M} \big\}$ ($u_0 \in X_0(\Om)), \{x \in \Om: u_0(x)>0  \}$ has smooth boundary) is an open covering of $\pa P \setminus \mathcal{M}.$ 
In a likewise manner, $\big\{ N(u_0): u_0 \in \pa (-P) \setminus \mathcal{M} \big\}$ is an open covering of $\pa (-P) \setminus \mathcal{M}.$ For other points $u_0 \in (X_0(\Om) \setminus \mathcal{M}) \setminus \{N(u_0): u_0 \in (\pa P \cup \pa (-P)) \setminus \mathcal{M}  \}$, $E'(u_0) \neq 0,$
there exists a neighborhood $N(u_0)$ such that 
$$\text{dist}(N(u_0), \pa P)>0,\,\, \text{dist}(N(u_0),\pa (-P))>0, $$
$$\norm{v_0}_{X_0(\Om)} < 2\norm{E'(u)},\,\, \<E'(u),v_0\>>\frac{1}{2}\norm{E'(u)}^2,$$
for all $u \in N(u_0).$ 
Therefore, the family of open sets $\Theta=\{N(u_0): u_0 \in X_0(\Om) \setminus \mathcal{M}  \}$ is an open covering of the paracompact space $X_0(\Om) \setminus \mathcal{M}.$ Hence, $\Theta$ has a locally finite refinement $\{U_{u^{-i}_0}\}_{i \in I},$ that is, each $U_{u^{-i}_0}\}$ is one of $N(u_0)$ and each $u \in X_0(\Om)$ has a neighborhood $B(u)$ such that $B(u) \subset U_{u^{-i}_0} \neq \emptyset$ only for finitely many $i \in I.$
Let us define for each $i \in I,$ the distance function 
$$\rho_i(u)=\text{dist}(u,X_0(\Om) \setminus U_{u^{i}_0}),$$ and for $u \in X_0(\Om)$, the vector field 
$$V(u)=\sum_{i \in I}\frac{\rho_i(u)}{\sum_{j \in I} \rho_j(u)}v_i,\,\ 
\text{where} \,\ v_i=u^i_0 \,\ \text{and}\,\ v_0=u_0.$$ 
Note that, since $\{U_{u^{i}_0}\}_{i \in I}$ is locally finite, sums in the above expressions defining $V$ are finite. Therefore, $V$ is locally Lipschitz. As $\rho_i$
vanishes outside $U_{u^{i}_0},$ $V(u)$ is a convex combination of finite elements satisfying (\ref{v-0}). This implies, $V(u)$ ia a pseudo-gradient of $E.$
We notice that $u-mV(u) \in P^\circ$ for all $u \in B(u_0) \cap P$ and $m>0$ sufficiently small.
%
%
Hence, we have,
\begin{equation}\label{**}
\lim_{m \to 0^+} \frac{1}{m} \text{dist}(u+m(-V(u)),P)=0.
\end{equation}
Consequently, (\ref{**}) is true for $u_0 \in P^\circ \setminus \mathcal{M}.$
By Corollary 2.3.1 of \cite{Guo-Sun} and the semigroup property, it is easy to see that $P,-P$ are all invariant subsets of descent flow of $E.$ This proves (i).

For $u_0 \in \pa P \setminus \mathcal{M},$ the solution of (\ref{V-u}), $u(t,v) \in P^\circ$ with initial condition $v$ for $t>0$ sufficiently small  and for all $v \in B(u_0) \cap P.$ This proves (ii).

Hence,  we conclude that $P^\circ$ is an invariant set of descent flow of $E.$ Similarly, $-P^\circ$ is also an invariant set of descent flow of $E.$ This completes the proof.

\end{proof}

\section{Palais-Smale Condition}\label{S-3}
In this section, we show that the energy functional $E$ satisfies the Palais-Smale (PS) condition.
We recall that 
 $\{u_n\}$ is a Palais-Smale sequence (in short, PS sequence)  of $E$ at level $c$ if $E(u_n)\to c$ and $E'(u_n)\to 0$ in $(X_0(\Om))'$, the dual space of $X_0(\Om)$. Moreover,  we say that 
	$E$ satisfies (PS)$_c$ condition if $\{u_n\}$ is any (PS) sequence in $X_0(\Om)$ at level $c$ implies $\{u_n\}$ has a convergent subsequence in $X_0(\Om)$.

\begin{lemma}\label{lem-2}
	Let $a_1, d_1< \la_1$, $E$ be defined as in (\ref{E})
 and $h$ satisfy  the assumptions $(A_1), (A_2), (A_3)$  Then $E$ satisfies the (PS)-condition on $X_0(\Om).$
\end{lemma}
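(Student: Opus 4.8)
The plan is to run the classical three-step argument for an operator of type $(S_+)$: first show that any Palais--Smale sequence $\{u_n\}\subset X_0(\Om)$ is bounded, then extract a weakly convergent subsequence and handle the lower-order term via the compact embedding into $C(\bar{\Om})$, and finally upgrade to strong convergence using the $(S_+)$-property of $(-\De)^s_p$. Write $A\colon X_0(\Om)\to(X_0(\Om))'$ for the fractional $p$-Laplacian, $\langle A(u),\varphi\rangle=\int_{\R^{2N}}\frac{|u(x)-u(y)|^{p-2}(u(x)-u(y))(\varphi(x)-\varphi(y))}{|x-y|^{N+ps}}\,dx\,dy$, so that $\langle E'(u),\varphi\rangle=\langle A(u),\varphi\rangle-\int_\Om h(u)\varphi\,dx$ by (\ref{E'}). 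For the boundedness step, note that $(A_1)$ and $(A_2)$ give, for each $\eps>0$, a constant $C_\eps>0$ with
$$H(t)\le\frac{a_1+\eps}{p}(t^+)^p+\frac{d_1+\eps}{p}(t^-)^p+C_\eps\qquad\text{for all }t\in\R;$$
for large $|t|$ this follows from $h(t)/(|t|^{p-2}t)\to a_1$ as $t\to+\infty$ and $\to d_1$ as $t\to-\infty$, while for bounded $t$ the primitive $H$ is simply bounded, so $a_0,d_0$ are irrelevant here. Since $a_1,d_1<\la_1$, fix $\eps>0$ with $\mu:=\max\{a_1,d_1,0\}+\eps<\la_1$; then, using $\la_1|u|_p^p\le\norm{u}_{X_0(\Om)}^p$ and $E(u_n)=c+o(1)$,
$$c+o(1)\ge\frac1p\Big(1-\frac{\mu}{\la_1}\Big)\norm{u_n}_{X_0(\Om)}^p-C_\eps|\Om|,$$
which bounds $\{u_n\}$ in $X_0(\Om)$.

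Next, by uniform convexity (hence reflexivity) of $X_0(\Om)$, a subsequence satisfies $u_n\deb u$ in $X_0(\Om)$. Because $N<sp$, the embedding $X_0(\Om)\hookrightarrow C^{0,\alpha}(\bar{\Om})$ is continuous, and bounded subsets of $C^{0,\alpha}(\bar{\Om})$ are precompact in $C(\bar{\Om})$ by Arzel\`a--Ascoli; hence, along a further subsequence, $u_n\to u$ uniformly on $\bar{\Om}$. In particular $\{u_n\}$ is uniformly bounded, so local Lipschitzness of $h$ forces $h(u_n)\to h(u)$ uniformly and $\int_\Om h(u_n)(u_n-u)\,dx\to0$. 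Testing $E'(u_n)$ against the bounded sequence $u_n-u$ gives $\langle E'(u_n),u_n-u\rangle\to0$, and combining the two we obtain $\langle A(u_n),u_n-u\rangle\to0$.

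It then remains to deduce $u_n\to u$ strongly in $X_0(\Om)$ from $u_n\deb u$ and $\langle A(u_n),u_n-u\rangle\to0$; this is the $(S_+)$-property of $A$, classical for $(-\De)^s_p$ and proved from the Simon inequalities for $\xi\mapsto|\xi|^{p-2}\xi$ on $\R$ (for $p\ge2$, $\langle A(v)-A(w),v-w\rangle\ge c\,\norm{v-w}_{X_0(\Om)}^p$; for $1<p<2$, a quotient version with the bounded factor $(\norm{v}_{X_0(\Om)}+\norm{w}_{X_0(\Om)})^{2-p}$), together with $\langle A(u),u_n-u\rangle\to0$ as $u_n\deb u$. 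Alternatively one can argue that $|u_n(x)-u_n(y)|^{p-2}(u_n(x)-u_n(y))$ is bounded in $L^{p'}(\R^{2N};|x-y|^{-N-ps}\,dxdy)$ and converges a.e., hence weakly, which yields $E'(u)=0$; then $\norm{u_n}_{X_0(\Om)}^p=\langle E'(u_n),u_n\rangle+\int_\Om h(u_n)u_n\,dx\to\int_\Om h(u)u\,dx=\norm{u}_{X_0(\Om)}^p$, and uniform convexity upgrades $u_n\deb u$ together with $\norm{u_n}_{X_0(\Om)}\to\norm{u}_{X_0(\Om)}$ to strong convergence.

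The only step that genuinely uses the hypotheses is the boundedness estimate, where the strict inequalities $a_1,d_1<\la_1$ rule out resonance with the principal eigenvalue and $(A_2)$ delivers the required growth control on $H$ at infinity; once boundedness holds, the rest is the standard interplay of the compact embedding $X_0(\Om)\hookrightarrow C(\bar{\Om})$ (valid since $N<sp$) with the $(S_+)$-structure of the nonlocal operator, and nonlocality introduces no extra difficulty in this lemma.
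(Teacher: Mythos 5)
Your proposal is correct, and its first half (boundedness) is essentially the paper's argument: both exploit $(A_2)$ and $a_1,d_1<\la_1$ to get $H(t)\le\frac{\mu}{p}|t|^p+C$ with $\mu<\la_1$, then use $\la_1|u|_p^p\le\norm{u}_{X_0(\Om)}^p$ to make $E$ coercive along the sequence; your version with the coefficient $\frac1p(1-\mu/\la_1)$ is the clean form of the paper's estimate (\ref{C-2}). The second half differs in organization. The paper never identifies the weak limit: writing $E'=F'-G'$ with $F(u)=\frac1p\norm{u}_{X_0(\Om)}^p$ and $G(u)=\Iom H(u)$, it uses compactness of $G'$ to conclude that $F'(u_n)$ is Cauchy in $X_0'(\Om)$ along a subsequence, and then the Simon-type monotonicity inequalities (\ref{C-3})--(\ref{C-4}) (split into $p\ge2$ and $1<p<2$, exactly as in your parenthetical) to transfer Cauchyness back to $\{u_n\}$ in $X_0(\Om)$. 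You instead extract $u_n\deb u$, use the compact embedding into $C(\bar\Om)$ plus local Lipschitzness of $h$ to kill the lower-order term, and invoke the $(S_+)$-property of $(-\De)^s_p$ (or, in your alternative, norm convergence plus uniform convexity). The two routes rest on the same elementary inequalities for $\xi\mapsto|\xi|^{p-2}\xi$; the paper's Cauchy argument avoids naming the limit, while your $(S_+)$ argument is the more standard textbook formulation and makes explicit where the compactness of the embedding $X_0(\Om)\hookrightarrow C(\bar\Om)$ (i.e.\ the hypothesis $N<sp$) enters. Either is acceptable; no gap in your argument.
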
	
\begin{proof}
Let $\{u_n\}_{n \geq 1}$ be a sequence such that $|E(u_n)| \leq C$ and $\norm{E'(u_n)}_{X'_0(\Om)} \to 0.$ By the assumption $(A_2),$ 
we note that,
$$H(u) \leq \frac{(a_1+\eps)}{p}u^p+C_1u \,\ \text{for all}\,\ u \geq 0,\,\ $$ and
$$H(u) \leq \frac{(d_1+\eps)}{p}|u|^p-C_1u \,\ \text{for all}\,\ u \leq 0,$$
where $C_1=\min_{u \in [0,-t_0]}h(u)$ for all $u \leq 0.$
This implies,
\bea\label{C-2}
C &\geq& E(u_n)=\frac{1}{p}\norm{u_n}_{X_0(\Om)}^p-\Iom H(u_n)dx,\no\\
&\geq& \frac{1}{p}\frac{(p-1)}{\la_1}\eps \norm{u_n}_{X_0(\Om)}^p-C_2\norm{u_n}_{X_0(\Om)}.
\eea
Hence, $\norm{u_n}_{X_0(\Om)}$ is bounded.
Let $$F(u):=\frac{1}{p}\norm{u}_{X_0(\Om)}^p, G(u):=\Iom H(u)dx.$$
So,  for $p \geq 2,$ we have,
$$\<F'(u_n)-F'(u_m),u_n-u_m\> \geq C \norm{u_n-u_m}^p_{X_0(\Om)}.$$
This yields us,
\begin{equation}\label{C-3}
\norm{F'(u_n)-F'(u_m)}_{X'_0(\Om)} \geq C \norm{u_n-u_m}_{X_0(\Om)}^{p-1}.
\end{equation}
For $1<p<2$ we obtain,
\begin{equation}\label{C-4}
\norm{u_n-u_m}_{X_0(\Om)} \leq C \norm{F'(u_n)-F'(u_m)}_{X'_0(\Om)}.
\end{equation}
We also have,
\begin{equation}\label{C-5}
\norm{E'(u_n)-E'(u_m)}_{X'_0(\Om)} \geq \norm{F'(u_n)-F'(u_m)}_{X'_0(\Om)}-
\norm{{G}'(u_n)-{G}'(u_m)}_{X'_0(\Om)}.
\end{equation}
Also, as ${G}'$ is compact from $X_0(\Om) $ to $X'_0(\Om),$ ${G}'(u_n)$ has a convergent subsequence. Using (\ref{C-3})-(\ref{C-5}) we conclude that $\{u_n\}$ has a convergent subsequence. Hence, $E$ satisfies PS-condition.

\end{proof}	

\begin{lemma}\label{lem-3}
	Let $(a_1,d_1) \notin \Sigma_p$ and
	the assumptions $(A_1), (A_2)$ and $(A_3)$ hold. Then $E$ satisfies (PS) condition on $X_0(\Om).$
\end{lemma}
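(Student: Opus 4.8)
The plan is to mirror the proof of Lemma \ref{lem-2}, replacing the role played there by the hypothesis $a_1,d_1<\la_1$ with the hypothesis $(a_1,d_1)\notin\Sigma_p^s(\Om)$, which is exactly what is needed to rule out the ``bad'' direction in which the functional $E$ fails to be coercive. So first I would take a sequence $\{u_n\}\subset X_0(\Om)$ with $|E(u_n)|\le C$ and $\norm{E'(u_n)}_{X_0'(\Om)}\to 0$, and establish boundedness of $\{u_n\}$ in $X_0(\Om)$. Here the argument is by contradiction: assume $t_n:=\norm{u_n}_{X_0(\Om)}\to\infty$ and set $v_n:=u_n/t_n$, so $\norm{v_n}_{X_0(\Om)}=1$; passing to a subsequence, $v_n\deb v$ weakly in $X_0(\Om)$ and, using the compact embedding $X_0(\Om)\hookrightarrow L^p(\Om)$ (indeed $X_0(\Om)\hookrightarrow C^{0,\al}_0(\bar\Om)$ since $N<sp$), strongly in $L^p(\Om)$.

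The key step is then to use $(A_2)$ together with $h(0)=0$ and the local Lipschitz/growth bound from $(A_3)$ to write $h(u)=a_1(u^+)^{p-1}-d_1(u^-)^{p-1}+g(u)$ with $|g(u)|\le \eps|u|^{p-1}+C_\eps$, so that dividing the equation $\<E'(u_n),\phi\>=o(1)\norm{\phi}$ by $t_n^{p-1}$ and passing to the limit (the nonlinear term $\int_{\R^{2N}}\frac{|u_n(x)-u_n(y)|^{p-2}(u_n(x)-u_n(y))(\phi(x)-\phi(y))}{|x-y|^{N+sp}}\,dx\,dy$ scales like $t_n^{p-1}$, and the standard $(S_+)$ property of $(-\De)^s_p$ upgrades weak to strong convergence $v_n\to v$) yields that $v$ is a weak solution of $(-\De)^s_p v=a_1(v^+)^{p-1}-d_1(v^-)^{p-1}$ with $\norm{v}_{X_0(\Om)}=1$, hence $v\neq 0$. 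This means $(a_1,d_1)\in\Sigma_p^s(\Om)$, contradicting the hypothesis. Therefore $\{u_n\}$ is bounded.

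Once boundedness is in hand, the remainder of the argument is identical to Lemma \ref{lem-2}: write $E=F-G$ with $F(u)=\tfrac1p\norm{u}_{X_0(\Om)}^p$ and $G(u)=\Iom H(u)\,dx$; up to a subsequence $u_n\deb u$ in $X_0(\Om)$ and, since $G'$ is compact from $X_0(\Om)$ to $X_0'(\Om)$, $G'(u_n)\to G'(u)$ strongly; combining this with $E'(u_n)\to 0$ gives $F'(u_n)\to G'(u)$ in $X_0'(\Om)$. Then the monotonicity inequalities \eqref{C-3} (for $p\ge 2$) and \eqref{C-4} (for $1<p<2$) applied along the lines of \eqref{C-5} force $\{u_n\}$ to be Cauchy, hence convergent, in $X_0(\Om)$. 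This shows $E$ satisfies the (PS) condition.

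The main obstacle is the boundedness step: unlike in Lemma \ref{lem-2}, where the strict inequality $a_1,d_1<\la_1$ gives coercivity directly via the variational characterization of $\la_1$, here one only knows $(a_1,d_1)\notin\Sigma_p^s(\Om)$, so coercivity can fail and one genuinely needs the blow-up/normalization argument above. The delicate points inside it are (a) justifying that the rescaled limit equation is exactly the Fu\v cik eigenvalue problem \eqref{P-1} — this requires the uniform growth control on the ``error'' $g(u)=h(u)-a_1(u^+)^{p-1}+d_1(u^-)^{p-1}$, which follows from $(A_2)$ for large $|u|$ and from $h(0)=0$ with local Lipschitzness for bounded $|u|$ — and (b) invoking the $(S_+)$-property of the fractional $p$-Laplacian to pass from $v_n\deb v$ with $\limsup\<(-\De)^s_p v_n, v_n-v\>\le 0$ to $v_n\to v$ strongly, so that $\norm{v}_{X_0(\Om)}=1$ and in particular $v\nequiv 0$.
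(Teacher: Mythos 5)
Your proposal is correct and follows essentially the same route as the paper: there too, unboundedness of the PS sequence is ruled out by normalizing, extracting a weak limit, using strong $L^p$ convergence and the $(S_+)$-type argument to upgrade to strong convergence in $X_0(\Om)$, and exhibiting a nontrivial solution of the Fu\v cik problem to contradict $(a_1,d_1)\notin\Sigma_p^s(\Om)$; the compactness step is then finished exactly as in Lemma \ref{lem-2} via the monotonicity inequalities and compactness of $G'$. The only cosmetic difference is that the paper packages the contradiction as a uniform lower bound $\norm{A\left(u/\norm{u}_{X_0(\Om)}\right)}_{X_0'(\Om)}\geq C_0$ for the Fu\v cik operator $A$ on the unit sphere, which it then applies to the PS sequence, whereas you blow up the PS sequence directly.
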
	
\begin{proof}
	Let $\{u_n\}_{n \geq 1}$ be a sequence such that $|E(u_n)| \leq C$ and $\norm{E'(u_n)}_{X'
		_0(\Om)} \to 0$ as $n \to \infty.$
\\
\textit{Claim}: $\{u_n\}$ is bounded in $X_0(\Om).$
\\
To see the proof, let us take $g(u)=h(u)-a_1(u^+)^{p-1}+d_1(u^-)^{p-1}.$
Then, $$\lim_{|u| \to +\infty}\frac{g(u)}{|u|^{p-2}u}=0.$$	
	We write $(\mathcal{P})$ as:
	\begin{equation*}
		\left\{\begin{aligned}
			(-\De)^s_p u  &=a_1(u^+)^{p-1}-d_1(u^-)^{p-1}+g(u)\quad\text{in }\quad \Om, \\
			u &= 0  \quad\text{in }\quad \Rn \setminus \Om.
		\end{aligned}
		\right.
	\end{equation*}
Hence, for all $h \in X_0(\Om)$	we have,
\bea\label{C-6}
\<E'(u_n),h\>&=&\int_{\R^{2N}}\frac{|u_n(x)-u_n(y)|^{p-2}(u_n(x)-u_n(y))(h(x)-h(y))}{|x-y|^{N+sp}}dxdy\no\\
&-&\Iom a_1(u_n^+)^{p-1}h+\Iom d_1(u_n^-)^{p-1}h-\Iom g(u_n)h \to 0,
\eea
as $n \to \infty.$	
Let us define $A: X_0(\Om) \to X'_0(\Om)$ by
\bea\label{C-7}
\<A(u),v\>&=&\int_{\R^{2N}}\frac{|u(x)-u(y)|^{p-2}(u(x)-u(y))(v(x)-v(y))}{|x-y|^{N+sp}}dxdy\no\\
&-&\Iom a_1(u^+)^{p-1}v+\Iom d_1(u^-)^{p-1}v \,\ \text{for all}\,\ v \in X_0(\Om).
\eea
\\
\textit{Claim}: There exists $C_0>0$ such that
\begin{equation}\label{A-bound}
 \norm{A\bigg(\frac{u}{\norm{u}_{X_0(\Om)}}\bigg)}_{X'_0(\Om)} \geq C_0>0 \,\ \text{for all}\,\ u \in X_0(\Om), u \neq 0.
 \end{equation}
We prove the claim by the method of contradiction. Suppose not, then there exists a sequence $v_n$ and $\eps_n \to 0$ such that
$$\bigg|\<A \big( \frac{v_n}{\norm{v_n}_{X_0(\Om)} } \big) ,w \>\bigg|
	 \leq \eps_n \norm{w}_{X_0(\Om)} \,\ \text{for all}\,\ w \in X_0(\Om).$$
Let $h_n=\frac{v_n}{\norm{v_n}_{X_0(\Om)}}.$ From (\ref{C-7}) we obtain,
$$|\<A(h_n),w\>| \leq \eps_n \norm{w}_{X_0(\Om)},$$ which implies,
\bea\label{C-8}
|\int_{\R^{2N}} \frac{|h_n(x)-h_n(y)|^{p-2}(h_n(x)-h_n(y))(w(x)-w(y))}{|x-y|^{N+sp}}dxdy
&-&\Iom \bar{a_1} (h_n^+)^{p-1}w \no\\
&+& d_1 (h_n^-)^{p-1}w| \leq \eps_n \norm{w}_{X_0(\Om)}\no\\
\eea
for all $w \in X_0(\Om).$
As $(h_n)$ is bounded in $X_0(\Om),$ then there exists $v_0 \in X_0(\Om)$ such that  up to a subsequence, $h_n \deb v_0$ and $h_n \to v_0$ in $L^p(\Om),$ $h_n \to v_0$ a.e. in $\Rn$.
Taking $w=w_n=h_n-v_0$ in (\ref{C-8}) we have,
\begin{align*}
&\limsup_{n \to \infty}
|\int_{\R^{2N}} \frac{|h_n(x)-h_n(y)|^{p-2}(h_n(x)-h_n(y))(h_n(x)-v_0(x)-h_n(y)+v_0(y))}{|x-y|^{N+sp}}dxdy\\
&-\Iom {a_1} (h_n^+)^{p-1}(h_n-v_0) + \Iom d_1 (h_n^-)^{p-1}(h_n-v_0)| =0,
\end{align*}
since $\norm{w_n}_{X_0(\Om)}=\norm{w}_{X_0(\Om)}$ is bounded and $\eps_n \to 0$ in (\ref{C-8}). Hence,  it follows that $h_n \to v_0$ in $X_0(\Om).$ Therefore, in (\ref{C-8}) we have letting $n \to \infty,$
\Bea
\int_{\R^{2N}} \frac{|v_0(x)-v_0(y)|^{p-2}(v_0(x)-v_0(y))(w(x)-w(y))}{|x-y|^{N+sp}}dxdy
&-&\Iom {a_1} (v_0^+)^{p-1}w\\
& +& \Iom d_1 (v_0^-)^{p-1}w =0,
\Eea
for all $w \in X_0(\Om).$
This yields us that: $(a_1,d_1) \in \Sigma_p^s(\Om)$ which is a contradiction. This proves the claim.
For $h=u_n$ in (\ref{C-6}), we have,
$$\bigg|\<E'(u_n),\frac{u_n}{\norm{u_n}_{X_0(\Om)}}\>\bigg| \leq \norm{E'(u_n)}_{X'_0(\Om)} \to 0 \,\ \text{as}\,\ n \to \infty,$$
so there exists $\eps_0>0$ such that for $n$ large enough, we have,
\begin{equation}\label{C-9}
|\norm{u_n}_{X_0(\Om)}^p-\Iom a_1 (u_n^+)^p+\Iom d_1 (u_n^-)^p-\Iom g(u_n)u_n|
\leq \eps_0 \norm{u_n}_{X_0(\Om)}.
\end{equation}
This together with  assumption $(A_2)$ gives us, 
\begin{equation}
\norm{u_n}_{X_0(\Om)}^p \leq C|u_n|_p + \eps_0 \norm{u_n}_{X_0(\Om)},
\end{equation}
for  some $C>0,$$n$ large enough. So, we have,
\Bea
C_0 \la_1^{\frac{p-1}{p}}|u_n|^{p-1}_p 
&\leq& C+\eps |u_n|_p^{p-1}. 
\Eea
Using a standard argument, it follows that $u_n$ is bounded in $X_0(\Om).$
 Now, proceeding as in Lemma \ref{lem-2}, we obtain a convergent subsequence of $\{u_n\}.$ 
This completes the proof.
	\end{proof}

\section{Construction of a curve connecting the interior of  positive and negative cones}\label{S-4}
In this section, we construct a curve $\Ga_1$ in $X_0(\Om)$ connecting the interior of the positive and negative cones and the energy functional assumes negative values on the curve $\Ga_1.$
\begin{lemma}\label{lem-4}
	Let $(a,d) \in S$ where $S$ is defined in (\ref{S}). Then there exists a path $\Ga_1 \subset X_0$ connecting $P^\circ$ and $-P^\circ$ such that
	$$E_{(a,d)}(u)<0 \,\ \text{for all}\,\ u \in \Ga_1.$$
\end{lemma}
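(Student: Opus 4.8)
The plan is to build the path $\Ga_1$ explicitly out of the data furnished by the Dancer--Fu\v cik curve $\Ga$, exploiting the hypothesis that $(a,d)$ lies strictly above $\Ga$. Since $(a,d) \in S$, there is a point $(a^*,d^*) \in \Ga$ with $a > a^*$ and $d > d^*$ (or one can reach such a point after moving along $\Ga$). By the defining property (c) of $\eta$, the problem (\ref{P-1}) with parameters $(a^*,d^*)$ has a nontrivial weak solution $w$, and by the description of $\Ga$ in (\ref{Ga}) together with standard regularity and the strong maximum principle for $(-\De)^s_p$, this solution is sign-changing with $w^+ \nequiv 0$ and $w^- \nequiv 0$. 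Normalizing, I would set $\phi_+ := w^+$ and $\phi_- := -w^-$, so that $\phi_+ \in P \setminus \{0\}$ and $\phi_- \in -P \setminus\{0\}$ with disjoint supports. The rough idea of the path is: start deep inside $P^\circ$ at $R\phi_1$ (the first eigenfunction, large $R$), travel through a two-parameter family $s\phi_+ + t\phi_-$, and end at $-R\phi_1$ deep inside $-P^\circ$.

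First I would compute the energy on the wedge. For $u = s\phi_+ + t\phi_-$ with $s,t \geq 0$, disjointness of supports gives
$$
E_{(a,d)}(s\phi_+ + t\phi_-) = s^p\Big(\tfrac{1}{p}\|\phi_+\|_{X_0(\Om)}^p - \tfrac{a}{p}|\phi_+|_p^p\Big) + t^p\Big(\tfrac{1}{p}\|\phi_-\|_{X_0(\Om)}^p - \tfrac{d}{p}|\phi_-|_p^p\Big) + (\text{cross term}),
$$
where the cross term is the (positive) interaction $\frac{2}{p}\iint \frac{|\cdot|^{p-2}(\cdot)(\cdot)}{|x-y|^{N+sp}}$ coming from the nonlocality — this is precisely the new difficulty relative to the local case, since in the local setting disjoint supports make the Dirichlet form split exactly. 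I would absorb it by first testing at $s=t$: because $w = \phi_+ + \phi_-$ solves (\ref{P-1}) with $(a^*,d^*)$, one has $\|w\|_{X_0(\Om)}^p = a^*|w^+|_p^p + d^*|w^-|_p^p$, hence $E_{(a^*,d^*)}(w)=0$, and therefore $E_{(a,d)}(w) = \frac{a^*-a}{p}|w^+|_p^p + \frac{d^*-d}{p}|w^-|_p^p < 0$ since $a>a^*$, $d>d^*$. Scaling, $E_{(a,d)}(\tau w) = \tau^p E_{(a,d)}(w) < 0$ for every $\tau>0$; so the whole ray $\{\tau w : \tau > 0\}$ has strictly negative energy.

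Next I would connect this ray to the two cones. Near the positive axis: $E_{(a,d)}(s\phi_+) = \frac{s^p}{p}(\|\phi_+\|_{X_0(\Om)}^p - a|\phi_+|_p^p)$; I would check $\|\phi_+\|_{X_0(\Om)}^p < a|\phi_+|_p^p$. Indeed, from property (d) and $a \in$ the relevant range, $\phi_+ = w^+$ being the positive part of a Fu\v cik solution forces $\|w^+\|_{X_0(\Om)}^p \le a^*|w^+|_p^p$ (testing the equation for $w$ against $w^+$ and using that the cross term contributes with the correct sign), which is $< a|w^+|_p^p$. Hence $E_{(a,d)}(s\phi_+) < 0$ for all $s>0$, and the segment $t \mapsto E_{(a,d)}(s\phi_+ + t\phi_-)$ interpolates continuously and, by the same $\tau^p$ scaling argument applied to $s\phi_+ + t\phi_-$ after rescaling onto the solution ray, stays negative on the whole closed wedge $\{s\phi_+ + t\phi_- : s,t\ge 0,\ (s,t)\neq(0,0)\}$ — the key point being that the wedge energy is a sum of a negative $s$-term, a negative $t$-term, and a term controlled by them. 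Finally I would append two short segments: from $R\phi_1$ to some $s_0\phi_+$ staying in $P^\circ$ (using $\phi_1, \phi_+ > 0$ in $\Om$ so the whole segment lies in the interior of the cone in $C^{0,\ga}_0(\bar\Om)$, on which $E_{(a,d)} \to -\infty$ along rays since $a > \la_1$), and symmetrically from $t_0\phi_-$ to $-R\phi_1$. Concatenating gives a continuous path $\Ga_1$ from $P^\circ$ to $-P^\circ$ with $E_{(a,d)} < 0$ throughout.

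The main obstacle, as flagged, is controlling the nonlocal cross term so that negativity of $E_{(a,d)}$ propagates from the single solution ray $\{\tau w\}$ to the full wedge and then to the connecting segments; I expect this to be handled by the observation that the interaction integral over disjoint supports has a definite sign and can be bounded above by a multiple of $\|w\|_{X_0(\Om)}^p$ via the convexity inequality $|a-b|^p \ge$ (the relevant monotonicity estimate for the $p$-Laplacian form), reducing everything to the scalar computations above. Regularity and positivity of $w^{\pm}$ (needed to place the endpoints in the open cones) follow from the embedding $X_0(\Om)\hookrightarrow C^{0,\al}_0(\bar\Om)$ recorded in the functional-setting section together with the nonlocal strong maximum principle.
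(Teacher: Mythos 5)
Your overall strategy coincides with the paper's: pick $(a',d')\in\Ga$ with $\la_1<a'<a$, $\la_1<d'<d$, take the sign-changing Fu\v cik eigenfunction $\phi_0$ for $(a',d')$, derive the testing inequalities $\norm{\phi_0^{+}}_{X_0(\Om)}^p\le a'\,|\phi_0^{+}|_p^p$ and $\norm{\phi_0^{-}}_{X_0(\Om)}^p\le d'\,|\phi_0^{-}|_p^p$ (the paper's (\ref{a'})--(\ref{d'}), obtained exactly as you describe, by testing against $\phi_0^{\pm}$ and using that the nonlocal cross contribution has a sign), run a path through the wedge spanned by $\phi_0^{+}$ and $-\phi_0^{-}$, and finally perturb the endpoints into $P^\circ$ and $-P^\circ$. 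You also correctly identify the one genuinely nonlocal obstruction: the interaction integral over $\{\phi_0>0\}\times\{\phi_0<0\}$, which is absent in the local case.

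However, you do not close the argument at precisely that point, and this is a genuine gap. Your $\tau^p$-scaling only transports negativity along rays, so it certifies the two axes and the single diagonal ray $\{\tau\phi_0\}$; for a general interior direction you still need $E_{(a,d)}(s\phi_0^{+}-t\phi_0^{-})<0$, and the assertion that the cross term ``is controlled by'' the $s$- and $t$-terms is exactly what must be proved. Concretely: the excess of $\norm{s\phi_0^{+}-t\phi_0^{-}}_{X_0(\Om)}^p$ over $s^p\norm{\phi_0^{+}}_{X_0(\Om)}^p+t^p\norm{\phi_0^{-}}_{X_0(\Om)}^p$ is an integral of $(sA+tB)^p-s^pA^p-t^pB^p$ with $A=\phi_0^{+}(x)$, $B=\phi_0^{-}(y)$, while the excesses hidden in the two testing identities are integrals of $\big[(A+B)^{p-1}-A^{p-1}\big]A$ and $\big[(A+B)^{p-1}-B^{p-1}\big]B$; what is needed is the pointwise bound $(sA+tB)^p\le (s^pA+t^pB)(A+B)^{p-1}$ for $A,B\ge 0$, which follows from Jensen's inequality applied to $x\mapsto x^p$ with weights $A/(A+B)$ and $B/(A+B)$. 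With that inequality one gets $E_{(a,d)}(s\phi_0^{+}-t\phi_0^{-})\le \frac{s^p}{p}(a'-a)|\phi_0^{+}|_p^p+\frac{t^p}{p}(d'-d)|\phi_0^{-}|_p^p<0$. Be aware that the paper itself is cavalier at this very step: its displayed estimate bounds $\frac1p\norm{t\phi_0^{+}+(1-t)\phi_0^{-}}_{X_0(\Om)}^p$ from above by $\frac{t^p}{p}\norm{\phi_0^{+}}_{X_0(\Om)}^p+\frac{(1-t)^p}{p}\norm{\phi_0^{-}}_{X_0(\Om)}^p$, whereas superadditivity of $x\mapsto x^p$ on the cross region gives the opposite inequality, so the Jensen accounting above is genuinely required to rescue the conclusion. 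A minor further point: your terminal segments joining $s_0\phi_0^{+}$ to $R\phi_1$ need a separate energy estimate (negativity of $E_{(a,d)}$ is not automatic along a convex combination of $\phi_1$ and $\phi_0^{+}$); the cleaner route, which the paper takes, is to perturb the entire compact path slightly, using continuity of $E_{(a,d)}$ together with $\overline{P^\circ}=P$ for the convex cone $P$, so that the endpoints land in $P^\circ$ and $-P^\circ$ while the energy stays negative.
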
	
\begin{proof}
	Using the fact that $(a,d) \in S,$  there exists $(a',d') \in \Ga$ ($\Ga$ is defined in (\ref{Ga})) such that $a' \in (\la_1,a), d' \in (\la_1,d)$ and $\phi_0 \in X_0(\Om)$ such that
		\begin{equation*}
		\left\{\begin{aligned}
			(-\De)^s_p \phi_0  &=a'(\phi_0^+)^{p-1}-d'(\phi_0^-)^{p-1}\quad\text{in }\quad \Om, \\
			\phi_0 &= 0  \quad\text{in }\quad \Rn \setminus \Om.
		\end{aligned}
		\right.
	\end{equation*}
We note that $\phi_0$ changes sign. 
It is easy to see that
$$(\phi_0(x)-\phi_0(y))(\phi_0^+(x)-\phi_0^+(y)) \geq (\phi_0^+(x)-\phi_0^+(y))^2.$$
Proceeding as before we can show,
\begin{equation}\label{a'}
a' \Iom (\phi_0^+)^p \geq \int_{\R^{2N}}\frac{|\phi_0^+(x)-\phi_0^+(y)|^{p}}{|x-y|^{N+sp}}.
\end{equation}
Similarly, we have,
\begin{equation}\label{d'}
d' \Iom (\phi_0^-)^p \geq \int_{\R^{2N}}\frac{|\phi_0^-(x)-\phi_0^-(y)|^{p}}{|x-y|^{N+sp}}.
\end{equation}
Using a simple calculation, we have, for all $t \in [0,1],$
\begin{align*}
	&E_{(a,d)}(t\phi_0^++(1-t)\phi_0^-)\\
	&\leq \frac{t^p}{p}\int_{\R^{2N}}\frac{|\phi_0^+(x)-\phi_0^+(y)|^p}{|x-y|^{N+sp}}dxdy+\frac{(1-t)^p}{p}\int_{\R^{2N}}\frac{|\phi_0^-(x)-\phi_0^-(y)|^p}{|x-y|^{N+sp}}dxdy\\
	&-\frac{a}{p}\Iom (t\phi_0^+)^p-\frac{d}{p}\Iom ((1-t)\phi_0^-)^p\\
\end{align*}	
This togetherwith (\ref{a'}) and (\ref{d'})  (as $a>a',d>d')$ implies,
$$E_{(a,d)}(t\phi_0^++(1-t)\phi_0^-) \leq \frac{(a'-a)}{p}\Iom (t\phi_0^+)^p+\frac{(d'-d)}{p}\Iom ((1-t)\phi_0^-)^p<0.$$
As $\{t\phi_0^++(1-t)\phi_0^-: 0\leq t \leq 1 \}$ is a compact subset of $X_0(\Om),$ we can choose a curve $\Ga_1$ which is defined by:
$$\Ga_1:=\{ \Ga_1(t) : 0 \leq t \leq 1  \}$$
such that $\Ga_1(t)$ is very close to $\{t\phi_0^++(1-t)\phi^- \}$ for each $t \in [0,1]$ and $\Ga_1(0) \in P^\circ, \Ga_1(1) \in -P^\circ.$ Also, we have,
$E_{(a,d)}(\Ga_1(t))<0$ for all $t \in [0,1].$
This completes the proof.
\end{proof}	

\section{Proof of  the Main Results}\label{S-5}
In this section, we prove our main  existence results, Theorem \ref{Theorem-1} and Theorem \ref{Theorem-2}.
First, we will prove the existence of distinct invariant sets on which the energy functional corresponding to $(\mathcal{P})$ is bounded from below. One can observe that the (PS)-condition yields the solution of (\ref{V-u}) tends to a critical point of $E$ in $X_0(\Om)$ or goes to a negative energy. Using this, we establish the existence of an invariant set in the complement of $P \cup (-P).$ We also prove that there exists a 
connected component $\Theta$ of $\pa U_1$ such that $\Theta \cap P^\circ \neq \emptyset, \Theta \cap (-P^\circ) \neq \emptyset, \Theta \cap [X_0(\Om) \setminus (-P \cup P)] \neq \emptyset.$

\subsection{Proof of Theorem \ref{Theorem-1}}

 From lemma \ref{lem-2}, we note that $E$ satisfies PS condition. In a likewise manner  as in (\ref{C-2}), one can check that $E$ is bounded from below in $X_0(\Om).$
As $a_0,d_0 \in S,$ using  the argument of Lemma \ref{lem-4}, it is easy to show the existence of a bounded path $\Ga_1$ connecting $P^\circ,-P^\circ$ on which the energy $E_{(a_0,d_0)}(\cdot)$ assumes negative values on that path. Now, using the compactness
of the path $\Ga_1$ in $X_0(\Om),$ we can see that 
$E_{(a,d)}(u)<-\de \,\ \text{for all}\,\ u \in \Ga_1$ and for some $\de>0.$
This implies, $$E_{(a_0,d_0)}(tu) \leq -t^p \de \,\ \text{for all}\,\ u \in \Ga_1, t>0.$$
Choosing $\eps$ small enough and using the assumption $(A_1)$ we obtain,
\begin{equation}\label{+1}
|h(u)-a_0|u|^{p-2}u|< \eps |u|^{p-2}u \,\ \text{for}\,\ u>0\,\
\text{small, that is,}\,\ (u \to 0^+),
\end{equation}
and
\begin{equation}\label{+2}
|h(u)-d_0|u|^{p-2}u|< \eps |u|^{p-2}u \,\ \text{for}\,\ -u>0\,\
\text{small, that is,}\,\ (u \to 0^-).
\end{equation}
Hence, for $t$ sufficiently small, using (\ref{+1}) and (\ref{+2}), we get,
\bea\label{D-1}
E(tu)&=&E_{(a_0,d_0)}(tu)-\Iom \bigg[ H(u)-\frac{a_0}{p}(tu^+)^p-\frac{d_0}{p}(tu^-)^p \bigg]\no\\
&\leq& -t^p \de +\Iom \frac{\eps t^p|u|^p}{p}<0 \,\ \text{for all}\,\ u \in \Ga_1.
\eea
This evidently yields the existence of a path $\Ga_2=t\Ga_1 (t>0$ small enough) connecting $P^\circ,-P^\circ$ such that $E$ assumes negative values on $\Ga_2.$
Let us construct the following set:
$$V_1=\big\{ h \in X_0(\Om): \,\ u(t_0,h) \in P^\circ \,\ \text{for some}\,\ t_0 \geq 0 \big\}.$$
By the regularity result, any weak solution of $(\mathcal{P})$, 
	$u \in C_0^{0,\ga}(\bar{\Om})$ for some $\ga\in (\al,1],$ (see \cite[Theorem 1]{L}) and using Strong Maximum Principle,
{[\cite{Quaas-17}, Theorem 1.2]},
 we obtain, $\mathcal{M} \cap (\pa P \cup (-\pa P))=\emptyset.$ Thus, for any positive weak solution $u_0 \in \pa P,$ there exists $\{u_n\} \subset \pa P_{X_0(\Om)}, n=1,2,3,\cdots$ such that $\norm{u_n-u_0}_{X_0(\Om)} \to 0$ as $n \to \infty.$ Therefore, $E'(u_n-u_0) \to 0$ as $n \to \infty.$
As $E \in C^1(X_0(\Om),\R),$ it is easy to get $\norm{u_n-u_0}_{C_0^\ga(\bar{\Om})} \to 0.$
We have already mentioned that 
		$\pa P_{C^\ga_0(\bar{\Om})}$ is dense in $\pa P$ for $\al<\ga\leq1.$ 
		So,	for any positive weak solution $u_0 \in \pa P,$ there exists $\{u_n\}_{n \geq 1} \subset \pa P_{C^\ga_0(\bar{\Om})}$ such that $\norm{u_n-u_0}_{X_0(\Om)} \to 0$ as $n \to \infty.$ As $E \in C^1(X_0(\Om),\R),$
		therefore, $E'(u_n-u_0) \to 0$ as $n \to \infty$ in $(X_0(\Om))^* $ ( where $(X_0(\Om))^*$ is the dual of $X_0(\Om)$). As $X_0(\Om)\hookrightarrow C^\al_0(\bar{\Om}),$ therefore,  $\big((-\De)^s_p\big)^{-1}:L^{\infty}(\Om) \to C^\ga_0(\bar{\Om})$ is compact, continuous, one-to-one operator, order-preserving operator for some $\ga\in(\al,1],$ and hence, $\norm{u_n-u_0}_{C_0^\ga(\bar{\Om)})} \to 0$ as $n \to \infty$.

Hence, $u_0 \in \pa P_{C^\ga_0(\bar{\Om})},$ which contradicts the Strong Maximum Principle. So, by Lemma \ref{lem-1}, $P^\circ,-P^\circ$ both are invariant sets of descent flow of $E$ in $X_0(\Om).$ By the continuous dependence of solution of (\ref{V-u}), it is easy to see the following:
\begin{itemize}
	\item [(i)]
	$V_1$ is an open invariant set of descent flow of $E$ in $X_0(\Om).$
	\item [(ii)]
	$\pa V_1$ is an invariant set of descent flow of $E$ in $X_0(\Om).$
	\item [(iii)]
	$V_1, \pa V_1$ lies in the complement of $P \cup (-P).$
	\end{itemize}
Therefore, by (\ref{D-1}) we conclude that:
$$\inf_{u \in P^\circ}E(u) \in (-\infty,0),\inf_{u \in -P^\circ}E(u) \in (-\infty,0),\inf_{u \in \pa V_1}E(u) \in (-\infty,0).$$
Assuming (without loss of generality) that $E$ has atmost finite number of critical points on any invariant set of descent flow of $E,$ we obtain that any solution of (\ref{V-u}) goes to negative energy or approaches a critical point.
Thus, there exists   $u_1 \in P^\circ, u_2 \in -P^\circ, u_3 \in \pa V_1$ such that $E'(u_i)=0$ for $i=1,2,3$ and $E$ attains infimum on $u_1, u_2, u_3$ on the corresponding sets $P^\circ, -P^\circ$ and $\pa V_1$ respectively.
This proves our result.

\subsection{Proof of Theorem \ref{Theorem-2}}
%
By Lemma \ref{lem-3}, $E$ satisfies the PS condition.
 Using assumption $(A_1)$ and $h(0)=0, a_0,d_0<\la_1;$ for $0<\eps_0<\la_1,$
there exists $\de>0$ such that $|h(t)|<(\la_1-\eps_0)|t|^{p-1}$ for all $|t|<\de.$ Using the definition of $E,$ it is not difficult to  see  that 0 is a strict local minima of $E.$	\\
\textit{Claim}: There exists a neighborhood of 0 in $X_0(\Om)$ which is an invariant descent flow of $E.$\\
	To see the proof, let us consider the neighborhoods
	$U_{\frac{1}{n}}=\{u \in U_{\de_0}: E(u)<\frac{1}{n} \}$ for all $n \in \N.$
		Now, for $n_0$ large enough and
	for $u_0 \in U_{\frac{1}{n_0}}, u_0 \neq 0,$ we have,
	$$\frac{1}{p}\eps_0 \frac{1}{\la_1}\norm{u(t,u_0)}_{X_0(\Om)}^p < E(u(t,u_0)) \leq E(u_0)<\frac{1}{n_0}.$$
	Hence, we conclude that $U_{\frac{1}{n_0}}$ is an open, invariant set of descent flow of $E$ in $X_0(\Om).$ This proves the claim.
	
	Let us denote: $U_0=U_{\frac{1}{n_0}}$ and
	 $$U_1=\{u_1 \in X_0(\Om): u(t',u_1) \in U_0 \,\ \text{for some}\,\ t'>0  \}.$$
It is easy to check that
	$U_1$ is an open invariant set of descent flow of $E$ in $X_0(\Om).$
Now, using the continuous dependence of $u(t,u_1)$ on the initial value $u_1,$ one can check that $\pa U_1$ is an invariant set of descent flow of $E$ if $\pa U_1 \neq \emptyset.$
We note that $E$ is bounded below by some positive constant on $\pa U_1.$
Hence, $E(u(t,u_0)) \geq C>0$ for all $u_0 \in \pa U_1$ and for some $C>0.$
Now, let us define the following sets:
\begin{equation}\label{V-1}
V_1:=\big\{h \in X_0(\Om): u(t_0,h) \in P^\circ \,\ \text{for some}\,\ t_0 \geq 0 \big\}
\end{equation}
	and
	\begin{equation}\label{V-2}
	V_2:=\big\{h \in X_0(\Om): u(t_0,h) \in -P^\circ \,\ \text{for some}\,\ t_0 \geq 0 \big\}.
	\end{equation}
	By Lemma \ref{lem-1}, 
	we conclude that both $V_1$ and $V_2$ are open invariant sets of descent flows of $E$ in $X_0(\Om).$
 This yields us: $\mathcal{M}$	and $\pa P \cup (-\pa P)$ are disjoint (as in the proof of Theorem \ref{Theorem-1}) and by Lemma \ref{lem-1}, we obtain $\pa V_i \cap P=\emptyset, \pa V_i \cap (-P)=\emptyset$ for $i=1,2.$	

\textit{Claim}: $\pa U_1 \neq \emptyset$ and there exists at least one connected component $\Theta \subset \pa U_1$ such that
$$\Theta \cap P^\circ \neq \emptyset, \Theta \cap (-P^\circ) \neq \emptyset, \Theta \cap (X_0(\Om)\setminus (-P\cup P)) \neq \emptyset.$$

\textit{Proof of Claim}: As $a_1,d_1 \in S,$ using Lemma \ref{lem-4}, we can choose a path $\Ga$ with no self-intersections on $\pa B_1$ (where $B_1$ is the unit ball in $X_0(\Om))$ 
connecting $P^\circ$ and $-P^\circ$ and
$E_{(a,d)}(u) <-\de<0$ for all $u \in \Ga_1$ for some $\de>0.$
For details, see Theorem 4.1 of \cite [page-27]{Whyburn}.
	Using $(A_2)$ we have for $\de>0,$
	$$\frac{\de}{p\la_1}\text{max}\{\norm{u}_{X_0(\Om)}^p: u\in \Ga \} < \text{min}\{ 
	|E_{(a,d)}(u): u \in \Ga| \}.$$
Hence, for all $u \in X_0(\Om)$ we have,
\bea\label{D-3}
\Iom |H(u^+)-\frac{a_1}{p}(u^+)^p|+\Iom |H(u^-)-\frac{d_1}{p}(u^-)^p|
&\leq&\frac{\de}{p}\frac{1}{\la_1}\norm{u}_{X_0(\Om)}^p+C\norm{u}_{X_0(\Om)}.
\eea
Therefore, for all $u \in \Ga, t\geq 0,$ we get,
\Bea
E(tu)
&\geq&t^p\max \{E_{(a_1,d_1)}(u): u \in \Ga\}+\frac{\de}{p}\frac{1}{\la_1}t^p \max\{\norm{u}_{X_0(\Om)}^p: u\in \Ga  \}\\
&+&tC\max \{\norm{u}_{X_0(\Om)}: u\in \Ga\}.
\Eea
This implies, as $t \to +\infty, E(tu) \to +\infty$ uniformly for $u \in \Ga$ with $\norm{u}_{X_0(\Om)}=1.$

Let us consider the surface defined by:
$Z=\{tu : t \geq 0, u \in \Ga  \}$ which is isomorphic to the closed upper half-plane in $\R^2.$ 
Using Lemma 4 of \cite{Dancer-1}, there exists at least one connected component $\Theta \subset (\pa U_1 \cap Z)$ such that $\Theta \cap P^\circ \neq \emptyset, \Theta \cap (-P^\circ)\neq \emptyset, \Theta \cap (X_0(\Om) \setminus (-P \cup P)) \neq \emptyset.$
 By Lemma \ref{lem-1}, using the invariance of the sets $P \cap \pa U_1, (-P) \cap \pa U_1$
 under the decent flow of $E$ and connectedness of $\pa U_1,$ we conlude that every solution of (\ref{V-u})
 goes to negative energy or tends to a critical point in $X_0(\Om).$ This enables the existence of at least three solutions with the desired properties.
Therefore, we have,
$$u(t,u_0) \to u_1 \in M\cap P \,\ \text{in}\,\ X_0(\Om) \,\ \text{as}\,\ t \to \infty, \,\ \text{for all}\,\ u_0 \in P\cap \pa U_1,$$
$$u(t,u_0) \to u_2 \in M\cap (-P) \,\ \text{in}\,\ X_0(\Om) \,\ \text{as}\,\ t \to \infty, \,\ \text{for all}\,\ u_0 \in (-P)\cap \pa U_1,$$
and 
$$u(t,u_0) \to u_3 \in M \,\ \text{in}\,\ X_0(\Om) \,\ \text{as}\,\ t \to \infty \,\ \text{for all}\,\ u \in \pa U_1 \setminus (V_1 \cup V_2).$$
Hence, we obtain that $(\mathcal{P})$ has at least three solutions 
$u_1 \in P\cap \pa U_1, u_2 \in (-P) \cap \pa U_1, u_3 \in \pa U_1 \setminus (V_1 \cup V_2).$ We have that,
 $u_1 \in P^\circ, u_2 \in -P^\circ, u_3$ is sign-changing and there exists $C>0$ such that $E(u_i) \geq C$ for all $i=1,2,3.$ This completes the proof.

\textbf{Acknowledgement}:
The author wishes to thank Prof. Mousomi Bhakta for reading a previous version of this paper and giving helpful suggestions and comments. This research is supported by the Austrian Science Foundations (FWF), Project number  P28010 and  the Czech Science Foundation, project GJ19–14413Y.

\bibliography{biblio}
\bibliographystyle{acm}

%
%
%
%
%
%
%
%
%
\end{document}